\begin{document}

\newcommand{\eq}{\begin{equation}}
\newcommand{\en}{\end{equation}}
\def\eqa{\begin{eqnarray}}
\def\ena{\end{eqnarray}}
\newcommand{\ignore}[1]{}
\def\no{\noindent}
\def\e{\mathbb{E \,}}
\def\M{{\mathcal M}}
\def\BN{\mathbb{N}}
\def\BR{\mathbb{R}}
\def\L{{\mathcal L}}
\def\p{\mathbb{P}}
\def\var{{\rm Var}}
\def\SD{{\rm SD}}
\def\A{{\mathcal A}}
\def\B{{\mathcal B}}
\def\C{{\mathcal C}}
\def\F{{\mathcal F}}
\def\X{{\mathcal X}}

\def\ta{{\alpha}}
\def\tb{{\beta}}
\def\tc{{\gamma}}

\def\PD{{\mathcal PD}}
\def\Q{{\mathcal Q}}
\def\R{{\mathcal R}}
\def\X{{\mathcal X}}
\def\hcX{\hat{\mathcal{X}}}
\def\hX{\hat{{X}}}
\def\Y{{\mathcal Y}}
\def\BZ{\mathbb{Z}}
\def\UZ{{\mathbf{Z}}}

\def\ra{\rightarrow}
\def\bone{{\mathchoice {\rm 1\mskip-4mu l} {\rm 1\mskip-4mu l}
{\rm 1\mskip-4.5mu l} {\rm 1\mskip-5mu l}}}

\newtheorem{theorem}{Theorem}
\newtheorem{lemma}[theorem]{Lemma}

\theoremstyle{definition}
\newtheorem{remark}[theorem]{Remark}
\newtheorem{example}[theorem]{Example}
  \numberwithin{theorem}{section}

\setcounter{tocdepth}{2}
 
\title[Probabilistic divide-and-conquer]{ Probabilistic divide-and-conquer:  a new  exact simulation method, with integer partitions as an example }

\author[Arratia]{Richard Arratia}
\address[Richard Arratia]{Department of Mathematics, University of Southern California,
Los Angeles CA 90089.}
\email{rarratia@math.usc.edu}

\author[DeSalvo]{Stephen DeSalvo}
\address[ Stephen DeSalvo]{Department of Mathematics, University of California Los Ageles,
Los Angeles CA 90024}
\email{stephendesalvo@math.ucla.edu}
 
\date{November 23, 2015}

\begin{abstract}
We propose a new method, probabilistic divide-and-conquer, for improving the success probability in rejection sampling.  
For the example
of integer partitions, there is an ideal recursive scheme which improves the rejection cost from asymptotically  order 
$n^{3/4}$  to a constant.  We show other examples for which a non-recursive, one-time application of probabilistic divide-and-conquer removes a substantial fraction of the rejection sampling cost.  

We also present a variation of probabilistic divide-and-conquer for generating i.i.d.~samples that exploits features of the coupon collector's problem, in order to obtain a cost that is sublinear in the number of samples.
\end{abstract}
 
\maketitle

\tableofcontents

\section{Introduction.}
   
\subsection{Exact simulation}

The task of 
simulation is to provide 
one or more
samples from a set 
according to some given probability distribution.   For many 
combinatorial problems, the given distribution is the uniform choice over all
possibilities of a fixed size.  
A host of combinatorial objects, including assemblies, multisets, and selections, may be expressed, see 
\cite{IPARCS}, in terms of
a process of independent random variables, conditional on a weighted sum --- the object size --- equalling a given target.  

The ``Boltzmann sampler," see \cite{Boltzmann}, samples this independent process once and is content with an object size that is \emph{close} to the given target.  
The \emph{rejection sampling} algorithm, on the other hand, 
samples this independent process \emph{repeatedly} until the condition is satisfied.  
The main measure of the efficiency of such a rejection sampling algorithm is the expected number of times the independent process must be sampled before the condition is satisfied.


\ignore{
 Divide-and-conquer is a basic strategy for algorithms.  
Notable examples include the Cooley-Tukey fast Fourier transform, attributable to Gauss \cite{gauss}, and Karatsuba's fast multiplication algorithm \cite{karatsuba}, which surprised Kolmogorov, \cite{karatsuba2}.  We note that these and 
other cases treated in textbooks on algorithms are \emph{deterministic}.  Randomized quicksort, see for example
\cite[Section 7.3]{Cormen2001}, is the prototype of divide-and-conquer using randomness, but such algorithms can be thought of as a variation on the deterministic algorithm, applied to a permutation of the input data.
}

The essence of \emph{probabilistic divide-and-conquer}  --- PDC  --- is random sampling of conditional distributions, which, \emph{when appropriately pieced together}, represent a sample from the target distribution.    
We assume throughout that the target object $S$ may be expressed as a pair $(A,B)$, where
\begin{equation}\label{def AB}
  A \in \A, \ B \in \B \ \mbox{ have given distributions},
\end{equation}
\begin{equation}\label{indep AB}
  A , B \ \mbox{ are independent},
\end{equation}
\begin{equation}\label{def h}
h: \A \times \B \to \{0,1 \} 
\end{equation}
$$
 \mbox{ satisfies   }  p := \e h(A,B) \in (0,1],\footnote{
 The requirement that $p>0$ is  a choice we make here, for the sake of simpler exposition;  there are $p=0$ examples where divide-and-conquer is useful, see \cite{PDCDSH}.
In cases where $p=0$,
 the conditional distribution, apparently specified by \eqref{def S}, needs further specification --- this is known as
 Borel's paradox.
 }
 $$
where, of course, we also assume that $h$ is measurable, and
\begin{equation}\label{def S}
S \in \A \times \B  \mbox{ has distribution  }  \L(S) = \L(\,  (A,B)\, | \,h(A,B) =1),
\end{equation}
i.e., the law of $S$ is the law of the independent pair $(A,B)$ \emph{conditional on} 
having $h(A,B)=1$.

Then 
rejection sampling may be viewed as sampling $(A_1,B_1),
(A_2,B_2),\ldots$ from the law of $(A,B)$ until $h(A_i,B_i)=1$.  PDC can be described as sampling from the law of $(A\, |\, h(A,B)=1)$ \emph{first}, say with observation $x$, and \emph{then} sampling from the law of $(B\, |\, h(x,B)=1)$.  A simple comparison of algorithms is below.

\begin{algorithm}{\rm
\begin{algorithmic}
\State 1.  Generate sample from $\L(A)$, call it $a$.
\State 2.  Generate sample from $\L(B)$, call it $b$.
\State 3.  Check if $h(a,b)=1$; if so, return $(a,b)$, otherwise restart.
\end{algorithmic}}
\caption{Rejection Sampling}
\label{WTGL procedure}
\end{algorithm}

\begin{algorithm}{\rm
\begin{algorithmic}
\State 1. Generate sample from $\L(A\, |\, h(A,B) = 1),$ call it $x$.
\State 2. Generate sample from $\L(B\, |\, h(x,B) = 1),$ call it $y$.
\State 3. Return $(x,y)$.
\end{algorithmic}}
\caption{Probabilistic Divide-and-Conquer} 
\label{PDC procedure}
\end{algorithm}

Our main tool is von Neumann's acceptance/rejection method, which is not necessary for PDC, but which provides a simple and effective means for sampling from the conditional distributions in Algorithm~\ref{PDC procedure}.  We review this method in Section~\ref{sect acceptance}.  
Our primary application of Algorithm 2 with von Neumann's rejection method is Algorithm~\ref{PDC procedure von Neumann}, where the rejection cost is split between Step~2 for sampling $A$, and Step~3 for sampling $B$.  Furthemore,
Step~3 in Algorithm~\ref{PDC procedure von Neumann} can be performed either directly or recursively; we give examples of each in Section~\ref{Algorithms Section}.

\begin{algorithm}{\rm
\begin{algorithmic}
\State 1. Generate sample from $\L(A),$ call it $a$.
\State 2. Accept $a$ with probability $t(a)$, where $t(a)$ is a function of $\L(B)$ \\
\ \ \ \ and $h$; otherwise, restart.
\State 3. Generate sample from $\L(B\, |\, h(a,B) = 1),$ call it $y$.
\State 4. Return $(a,y)$.
\end{algorithmic}}
\caption{Probabilistic Divide-and-Conquer via von Neumann} 
\label{PDC procedure von Neumann}
\end{algorithm}

A key class of examples is discussed in Section \ref{sect b=1}, 
where the calculation for $t(a)$ is very simple, and the speedup is order of $\sqrt{n}$ for the specific examples relating to $k$-cores and set partitions, and of order $n^{1/3}$ for the specific example relating to plane partitions.
In these examples, the PDC is not recursive 
and the programming is very easy. 
At the opposite extreme, when 
$B$ can be made into a scaled replica of the original problem, and calculation of $t(a)$ is still
easy enough  --- as in the example of ordinary integer partitions, thanks to results of Hardy and Ramanujan \cite{HR18}, Rademacher \cite{Rademacher} and  Lehmer \cite{Lehmer38,Lehmer39} --- a recursive, self-similar PDC is available, and although the algorithm is more involved, this gives the fastest method for large $n$.


One progenitor to PDC is by McKay and Wormald \cite{WormaldMcKay}, where the ability to calculate likelihood ratios is combined with 
acceptance/rejection sampling to reduce the number of loops and double edges implied by a random configuration;  the luck required to get a simple graph is spread over several stages.  
Another progenitor to PDC, this time in the recursive, self-similar case, is by Alonso~\cite{Alonso} on sampling of Motzkin words. 
In that case, the rejection probabilities of partially completed Motzkin words are given explicitly by quotients of binomial coefficients.

A separate application  
of our divide-and-conquer paradigm, championed in Section~\ref{mam}, is what we call \emph{mix-and-match}.  
Our desire is to combine $A_1,A_2,\ldots,A_m$, i.i.d.~samples from $\L(A)$, and $B_1,B_2,\ldots,B_m$, i.i.d.~samples from $\L(B)$, with $m^2$ chances to have $h(A_i,B_j)=1$.  However, doing this blindly does not
achieve the goal of exact sampling, although it does yield a \emph{consistent} estimator; see~\cite[Section 4.6]{DeSalvoThesis}. 
A particularly practical variation, dubbed \emph{deliberately missing data}~\cite[Section 4.4.3]{DeSalvoThesis}, fixes some number $v$ of samples in advance, and samples $A_1, A_2, \ldots$ and $B_1, B_2, \ldots$ until exactly $v$ matching pairs have been found; this procedure is notably sublinear in $v$. 
Such a comparison of two lists has been exploited previously, as in \cite{DiffieHellman}, the meet-in-the-middle attack, and as in
biological sequence matching, \cite{ArratiaWaterman1,ArratiaWaterman2}, where  for two independent sequences of i.i.d.~letters, we are interested in finding contiguous blocks of letters that appear in both sequences.  

\ignore{
A separate application  
of our divide-and-conquer paradigm, introduced in~\cite{DeSalvoThesis}, is what we call \emph{mix-and-match}.  
Our desire is to combine $A_1,A_2,\ldots,A_m$, i.i.d.~samples from $\L(A)$, and $B_1,B_2,\ldots,B_m$, i.i.d.~samples from $\L(B)$, with $m^2$ chances to have $h(A_i,B_j)=1$.  However, doing this blindly does not
achieve the goal of exact sampling, although it does yield a \emph{consistent} estimator. 
Such a comparison of two lists has been exploited previously, as in \cite{DiffieHellman}, the meet-in-the-middle attack, and as in
biological sequence matching, \cite{ArratiaWaterman1,ArratiaWaterman2}, where  for two independent sequences of i.i.d.~letters, we are interested in finding contiguous blocks of letters that appear in both sequences.  
}
\ignore{
A particularly practical variation of mix-and-match samples $X_1, X_2, \ldots, X_m$, i.i.d.~samples from $\L(A | h(A,B)=1),$ followed by sampling of $B_1, B_2, \ldots$ sequentially, such that the first matching pair which satisfies $h(X_i, B_j) = 1,$ where $i$ and $j$ can be taken as the smallest index of all possible matches, is used as the $i^{th}$ element of the sample\footnote{Note that using this as the $j^{th}$ element of the sample introduces bias.}.  We can continue this until all $X_i$'s have a match, $i=1,2,\ldots, m,$ which is a generalization to the coupon collector's problem; alternatively, a variation called \emph{deliberately missing data} only continues generating $B_j$'s until exactly $v \leq m$ of the $X_i$'s have a matching pair, and leaves the remaining unpaired. 
The advantage of this approach is that it is sublinear in the number of samples from the distribution which generates $B_j$'s. 
While we do not discuss this approach further, we refer the interested reader to~\cite{DeSalvoThesis}. 
}

Finally, we present in Section~\ref{appendix} practical considerations when implementing PDC for integer partitions on a computer.  

\ignore{
There is another novel feature to sampling using mix-and-match.
  In mix-and-match, to get a sample of size $r$,  we gain not only from the coupon collector's advantage  $(\log r) / r$, but also, in cases where multiple copies $k$ of the same colors are in the list of required tasks, passed from the $A$ phase to the $B$ phase, by the multiple coupon collector's advantage, involving $\log \log k$  \cite{DoubleDixieCup}.  Of course, in mix-and-match the B distribution on color probabilities is not uniform, and the numbers $k_c$ needed of color $c$ varies over the colors, so no simple analysis of the speedup seems evident.
   }
   
\subsection{An example: integer partitions}
Our first example to illustrate the use of PDC is uniform sampling of integer partitions of a given size~$n$.  Additional examples are given in Section~\ref{additional examples}.  
The starting point is 
 a conditional distribution of the form 
\[\left((Z_1, Z_2, \ldots, Z_n) \bigg| \sum_{i=1}^n i\, Z_i = n\right),
\]
where $Z_1, \ldots, Z_n$ are \emph{independent} geometric distributions with respective parameters $p_1, \ldots, p_n$, with $p_i = 1 - e^{-i \pi/\sqrt{6n}}$, $i=1,\ldots,n$.
The precise description of how they correspond to a random integer partition is described in Section~\ref{sect grand canonical}; for now, we simply wish to emphasize the simple and explicit nature of the following algorithms, in a way accessible to the reader solely interested in implementing such algorithms on a computer.  Many combinatorial structures follow a similar pattern; see \cite{IPARCS}.

We now summarize the relevant rejection sampling and PDC algorithms, and postpone their full explanations until Section~\ref{Algorithms Section}.  One simple measure of an algorithm's performance is the asymptotically expected number of times the $n$--tuple $(Z_1, \ldots, Z_n)$ must be sampled before the simulation terminates and returns a valid sample, which we call the asymptotic \# rejections in the table below.  The following quantities are proved in Section~\ref{Algorithms Section}. 
\[
\begin{array}{lc}
\text{Algorithm} & \text{Asymptotic \# rejections} \\ \hline
\text{Algorithm~\ref{IP WTGL procedure}: Rejection Sampling} & 2 \sqrt[4]{6}\, n^{3/4} \\
\text{Algorithm~\ref{IP PDC DSH procedure}: PDC Deterministic} & 2 \, \pi \sqrt[4]{6}\, n^{1/4} \\
\text{Algorithm~\ref{IP SS PDC procedure}: Self--Similar PDC} & 2\sqrt{2}.
\end{array}
\]

In what follows, $U$ will denote a uniform random variable in the interval $(0,1),$ independent of all other random variables, and  for use in Algorithm~\ref{IP SS PDC procedure} we define, via \eqref{dist Z} and \eqref{def x(n)},
\begin{equation}\label{duck not 2}
f_n(j) := \p_{x(n)}\left(\sum_{i\le n/2} 2i\, Z_{2i} = 2 j\right). \end{equation}
\begin{algorithm}{\rm
\begin{algorithmic}
\State 1.  Generate sample from $\L(Z_1, Z_2, \ldots, Z_n)$, call it $(z_1, \ldots, z_n)$.
\State 2.  If $\sum_{i=1}^n i\, z_i = n,$ return $(z_1, \ldots, z_n)$; else restart.
\end{algorithmic}}
\caption{Rejection Sampling of Integer Partitions}
\label{IP WTGL procedure}
\end{algorithm}

\begin{algorithm}{\rm
\begin{algorithmic}
\State 1. Generate sample from $\L(Z_2, \ldots, Z_n),$ call it $(z_2, \ldots, z_n)$.  \\
\ \ \ \ Set $k := n - \sum_{i=2}^n i\, z_i$.
\State 2. If $k \geq 0$ and $U < e^{-k\,\pi/\sqrt{6n}}$, return $(k, z_2, \ldots, z_n)$; else restart.
\end{algorithmic}}
\caption{Probabilistic Divide-and-Conquer Deterministic Second Half for Integer Partitions} 
\label{IP PDC DSH procedure}
\end{algorithm}

\begin{algorithm}{\rm
\begin{algorithmic}
\Procedure{SS\_PDC\_IP}{$n$}
\State 0.  If $n=1$, return 1; otherwise,
\State 1. Generate sample from $\L(Z_1, Z_3, Z_5, \ldots),$ call it $(z_1, z_3, z_5, \ldots)$.
\State 2. Set $k := n - \sum_{i\, odd} i\, z_i$.  
\State 3. If $k<0$ or $k$ is odd, restart.

\State 4. If $U < f_n(k/2) / \max_\ell f_n(\ell),$\\
\qquad \qquad let $(z_2, z_4, \ldots) =\ $SS\_PDC\_IP($k/2$);\\ 
\qquad \qquad return $(z_1, z_2, \ldots, z_n,0,0,\ldots).$
\State \quad\  Else restart.
\EndProcedure
\end{algorithmic}}
\caption{Self--Similar Probabilistic Divide-and-Conquer for Integer Partitions} 
\label{IP SS PDC procedure}
\end{algorithm}

\begin{remark}\label{X}
Note that while Algorithms~\ref{IP WTGL procedure}~and~\ref{IP PDC DSH procedure} are entirely explicit, Algorithm~\ref{IP SS PDC procedure} relies on our ability to determine whether $U < f_n(k/2)/\max_\ell f_n(\ell)$.  \emph{This is not the same as being able to compute $f_n(j)$ for various values of the parameters.}  The task in Algorithm~\ref{IP SS PDC procedure} is considerably easier, since we simply need to compare the leading bits of each quantity until there is disagreement.  In fact, \emph{on average we only need the leading 2 bits of each side of the inequality in order to determine the rejection in Step 4 of Algorithm~\ref{IP SS PDC procedure}}; see \cite{KnuthYao} and our discussion in Section~\ref{floating}.    
  There are also several other improvements to Algorithm~\ref{IP SS PDC procedure} that can be applied at the expense of further complicating the algorithm; these are discussed in Section~\ref{sect odd}.
\end{remark}

\ignore{

We focus on three  versions of the general PDC idea applied to integer partitions.  
Each method expresses a partition as $\lambda =(A,B)$.
\begin{enumerate}

\item In $(A,B)$, $A$ is the (list of) large parts, say $\lfloor \sqrt{n}\rfloor +1$ up to $n$, $B$ is the small parts, size 1 up to $\lfloor \sqrt{n} \rfloor$. Mix-and-match  offers an additional speedup, but analysis of the cost is not easy.

\item In $(A,B)$, $A$ specifies the number of parts of sizes 2 through $n$, $B$ is the number of parts of size 1.  Hence the $B$ side of the simulation is trivial, with no calls to a random number generator.  Nevertheless, there is a large speedup, by a factor asymptotic to  
$\sqrt{n}/c$, where $c  = \pi/\sqrt{6}$.  This method is remarkably robust and powerful; see section
\ref{sect b=1}.

\item  In $(A,B)$, $A$ corresponds to $d(z)=\prod_{i \ge 1} (1+z^i)$ enumerating partitions with distinct parts, so that we may exploit the classic identity\footnote{where, of course, $p(z)=\prod_{i \ge 1} (1-z^i)^{-1}$ enumerates all partitions.}
 $$p(z) =d(z) p(z^2). $$    This method iterates beautifully, reducing the target, $n$, by a factor of approximately 4 per iteration, with an acceptance/rejection cost of only roughly $2 \sqrt{2}$, improved  in Section \ref{sect parity} to $\sqrt{2}$, by combining with the idea in method (2).  \label{hey 3}
\end{enumerate}
}
 
On a personal computer, the RandomPartition function in Mathematica\textsuperscript{\textregistered}'s Combinatorica package \cite{Mathematica}
appears to hit the wall at around $n=2^{20}$.  On the same computer, the recursive divide and conquer algorithm in Algorithm~\ref{IP SS PDC procedure}
can handle $n$ as large\footnote{In Matlab \cite{MATLAB}, we got up to $2^{49}$; the $2^{58}$ is from a C++
implementation;  in both cases, memory rather than time is the limiting factor.}
 as $2^{58}$.  
Relative to the rejection sampling algorithm in Algorithm~\ref{IP WTGL procedure}, analyzed in Section \ref{sect grand canonical},
the recursive  divide-and-conquer achieves more than a trillion-fold speedup at $n=2^{58}$.

\section{The basic lemma  for exact  sampling  with divide-and-conquer}
\ignore{
We assume throughout that
\begin{equation}\label{def AB}
  A \in \A, \ B \in \B \ \mbox{ have given distributions},
\end{equation}
\begin{equation}\label{indep AB}
  A , B \ \mbox{ are independent},
\end{equation}
\begin{equation}\label{def h}
h: \A \times \B \to \{0,1 \} 
\end{equation}
$$
 \mbox{ satisfies   }  p := \e h(A,B) \in (0,1],\footnote{
 The requirement that $p>0$ is \emph{not needed} for divide-and-conquer to be useful;
 but rather, a choice we make for the sake of simpler exposition.  In cases where $p=0$,
 the conditional distribution, apparently specified by \eqref{def S}, needs further specification --- this is known as
 Borel's paradox.
 }
 $$
where, of course, we also assume that $h$ is measurable, and
\begin{equation}\label{def S}
S \in \A \times \B  \mbox{ has distribution  }  \L(S) = \L(\,  (A,B)\, | \,h(A,B) =1),
\end{equation}
i.e., the law of $S$ is the law of the independent pair $(A,B)$ \emph{conditional on} 
having $h(A,B)=1$.
} 
\ignore{
Note that a restatement of \eqref{def S}, exploiting the hypothesis \eqref{def h}, is that
for measurable sets $R \subset \A \times \B$, 
$$
\p(S \in R ) = \frac{\p((A,B) \in R \mbox{ and } h(A,B)=1)}{p},
$$
or equivalently, for bounded measurable functions $g$ from $\A \times \B$ to the real numbers, 
$$
 \e g(S) =  \e( g(A,B) h(A,B)) \, /p.
 $$

Since we have assumed $p>0$, this is elementary conditioning.  This allows   the distributions of $A$ and $B$ to be arbitrary:  discrete,   absolutely continuous, or otherwise.
}
\subsection{Statement of the PDC Lemma}
The following lemma is a straightforward application of Bayes' formula.

\begin{lemma}\label{lemma 1}
Suppose $X$ is a random element of $\A$ with distribution 
\begin{equation}\label{def X}
   \L(X) = \L( \, A \, | \, h(A,B)=1 \, ),
\end{equation}
and  $Y$ is a random element of $\B$ with conditional distribution
\begin{equation}\label{def Y}
   \L(Y \, | X=a ) = \L( \, B \, | \, h(a,B)=1 \, ).
\end{equation}
Then $(X,Y) =^d S$, i.e., the pair $(X,Y)$ has the same distribution  as $S$, given by \eqref{def S}.
\end{lemma}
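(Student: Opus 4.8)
The plan is to verify directly that the pair $(X,Y)$ defined by \eqref{def X} and \eqref{def Y} assigns the same probabilities to measurable rectangles, and hence (by a standard $\pi$--$\lambda$ argument) to all measurable sets, as $S$ does via the formula $\p(S\in R)=\p((A,B)\in R,\ h(A,B)=1)/p$. Since a measurable rectangle $R=R_1\times R_2$ with $R_1\subset\A$, $R_2\subset\B$ generates the product $\sigma$-algebra and the class of such rectangles is closed under intersection, it suffices to compare $\p(X\in R_1,\ Y\in R_2)$ with $\p(S\in(R_1\times R_2))$.

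First I would compute the right-hand side: by \eqref{def S} and the restatement noted just before the lemma,
\[
\p\big(S\in R_1\times R_2\big)=\frac{1}{p}\,\e\big(\bone(A\in R_1)\,\bone(B\in R_2)\,h(A,B)\big).
\]
Next I would compute the left-hand side by conditioning on $X$. Using \eqref{def Y}, for each $a$ we have $\p(Y\in R_2\mid X=a)=\p(B\in R_2\mid h(a,B)=1)=\e(\bone(B\in R_2)h(a,B))/q(a)$, where $q(a):=\e\,h(a,B)=\p(h(a,B)=1)$; note $q(a)>0$ for $\L(X)$-almost every $a$, since $\L(X)$ is supported where the conditioning event has positive probability. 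Meanwhile \eqref{def X} gives, for a bounded measurable $\varphi$, $\e\,\varphi(X)=\e(\varphi(A)h(A,B))/p=\e(\varphi(A)q(A))/p$, the last step by independence \eqref{indep AB} and Fubini/Tonelli. Taking $\varphi(a)=\bone(a\in R_1)\,\p(Y\in R_2\mid X=a)$ and substituting the expression for the conditional probability, the factors $q(A)$ cancel and one is left with
\[
\p(X\in R_1,\ Y\in R_2)=\e\big(\varphi(X)\big)=\frac{1}{p}\,\e\big(\bone(A\in R_1)\,\e(\bone(B\in R_2)h(A,B)\mid A)\big),
\]
which, by the tower property and independence again, equals $\frac{1}{p}\,\e(\bone(A\in R_1)\bone(B\in R_2)h(A,B))$ --- exactly the right-hand side computed above.

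Having matched the two measures on the generating $\pi$-system of measurable rectangles, I would invoke the uniqueness-of-extension ($\pi$--$\lambda$) theorem to conclude $\L(X,Y)=\L(S)$ on $\A\times\B$, which is the assertion $(X,Y)=^d S$. The main point requiring care --- the only genuine obstacle --- is the bookkeeping with the function $q(a)=\e\,h(a,B)$: one must be sure that the conditional law \eqref{def Y} is only invoked on the set $\{q(a)>0\}$, that this set carries all the mass of $\L(X)$ (so the $q(A)$ in the denominator is a.s.\ positive and the cancellation is legitimate), and that all interchanges of expectation with conditional expectation are justified by the independence \eqref{indep AB}, boundedness of $h$ by $1$, and the assumption $p>0$; once these measure-theoretic niceties are observed, the computation is just an application of Bayes' formula as the authors indicate.
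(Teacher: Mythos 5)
Your proof is correct and follows essentially the same route as the paper's: disintegrate the law of $(X,Y)$ over $X$, use the Bayes-type identity $\p(X\in da)=\p(A\in da)\,\e h(a,B)/p$, cancel the factor $\e h(a,B)$ (with the same care on the null set where it vanishes), and invoke independence to recover $\e\bigl(\bone_{R_1}(A)\bone_{R_2}(B)h(A,B)\bigr)/p$. The only difference is packaging: you test on measurable rectangles and finish with a $\pi$--$\lambda$ argument, while the paper tests directly against arbitrary bounded measurable $g$, which is an equivalent formulation of the same computation.
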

\ignore{
\begin{proof}
A restatement of \eqref{def X} is
\begin{equation}\label{A to X}
\p(X \in da) = \frac{\p(A \in da) \, \e h(a,B)}{p},
\end{equation}
and relation \eqref{def Y}
is equivalent to
\begin{equation}\label{def Y 2}
   \L((X,Y) \, | X=a ) = \L( \, (a,B) \, | \, h(a,B)=1 \, ).
\end{equation}
Hence, for any bounded measurable $g: \A \times \B \to \BR$,
\begin{eqnarray*}
  \e g(X,Y) & = & \e( \e( g(X,Y)|X)) \\
     & = & \int_\A \p (X \in da) \ \ \ \ \e( g(X,Y) | X=a) \\
     & = & \int_\A \frac{\p(A \in da) \, \e h(a,B)}{p} \ \frac{ \e( g(a,B) h(a,B))}{\e h(a,B)}\\
    & = &\frac{1}{p}  \int_\A \p(A \in da)  \ \e( g(a,B) h(a,B)) \\
   & = & \e (g(A,B) | h(A,B)=1) = \e g(S).
 \end{eqnarray*}
 We used \eqref{def Y 2} for the middle line in the display above;  on the set $\A_0 := \{a \in \A: \e h(a,B)=0 \}$,
 which contributes 0 to the integral, we took the usual liberty of dividing by 0, rather than writing out separate expressions for the integrals over $\A_0$ and $\A \setminus \A_0$.
\end{proof}
}

\subsection{Use of acceptance/rejection sampling}\label{sect acceptance}
Assume that we know how to generate a sample from $\L(A)$ --- this is under the distribution in \eqref{def AB}, where $A,B$ are independent.
But, we need instead to sample from an alternate distribution, $\L(A\, |\, h(A,B)=1)$, denoted above as that of $X \in \A$. 
The rejection method recipe, for using Equation~\eqref{def X}, may be viewed as having 2 steps, as follows. 
Let $p_a := \e h(a,B)$.  

\begin{enumerate}

\item[(a)] Find a threshold function $t: \A \to [0,1]$, with $t(a)$ proportional to $p_a,$ and $t(a) \leq 1,$ for all $a \in \A$. 

\item[(b)] Propose i.i.d.~samples $A_1,A_2,\ldots$ and independently generate uniform (0,1) random variables $U_1,U_2,\ldots$, until $U_i \leq t(A_i).$

\end{enumerate}
Assuming that we can find an $a^*$ where $p_a$ achieves its maximum value, then the optimal function is $t(a) = p_a / p_{a^*}$. 
However, we note that even when $p_{a^*}$ is difficult to compute exactly, we simply need an upper bound on $p_{a^*}$, and really only the ability to compute the leading bits of $t(a)$ as needed until a disagreement is reached between the bits of a uniform random number in $[0,1]$ and $t(a)$; see Remark~\ref{X} and Section~\ref{sect cost}. 

For comparison with hard rejection sampling, we write 
\[ t(a) = \frac{C}{p}\,  p_a, \] 
where $C$ is allowed to be \emph{any} positive constant such that $C \leq p/p_{a^*}$, with optimal value given by $C = p / p_{a^*}$. 
See \cite{DeVroye} for a thorough treatment of acceptance/rejection methods.

The expected fraction of proposed samples $A_i$ to be accepted will be the average of $t(a)$ with respect to the distribution of $A$, i.e.,
$$
 p_{\rm acc} := \p( U \le t(A)) = \e t(A)  = C  \times \frac{p_A}{p} = C,
$$
and the expected number of proposals needed to get each acceptance is the reciprocal of this, so we define
\begin{equation}\label{acceptance cost}
\mbox{ Acceptance cost} := \frac{1}{p_{\rm acc}} =\frac{1}{C} = \frac{p_{a^*}}{p},
\end{equation}
where the last equality assumes assumes the optimal choice of $C$.
\ignore{ 
\begin{remark}
A priori, it appears as though the threshold function $t(a)$ depends on $p$; however, the scaling by $C$ eliminates this dependency, and simply requires detailed knowledge of the distribution of $B$.  It would be tempting to replace $C$ by $C' = C/p$ in our exposition, and define $t(a)$ as
\[ t(a) = C' \times \e h(a,B) = \frac{\e h(a,B)}{\e h(a^\ast,B)}, \]
but since the overall cost of the algorithm is defined in terms of $C$, and since we will be comparing this approach to the usual rejection sampling, which has overall cost $1/p$, we prefer the separate notation.
\end{remark}
} 
For comparison, if we were using rejection sampling instead of divide-and-conquer, i.e., proposing pairs $(A_i,B_i)$ until $h(A_i,B_i)=1$, with success probability $p$ at each trial and expected number of proposals to get one success equal to $1/p$, then, \emph{ignoring the cost of proposing the $B_i$}, the 
ratio of old cost to new might be called a speedup:
\begin{equation}\label{speedup}
\text{speedup} = \frac{\text{old cost}}{\text{new cost}} = \frac{1/p}{1/C} = \frac{1}{p_{a^*}},
\end{equation}
where again the last equality assumes the optimal choice of $C$. 
Even though the cost involved in proposing the $B$ is in general \emph{not negligible}, in Section \ref{sect b=1} we will give several natural examples, similar to Algorithm~\ref{IP PDC DSH procedure}, where it is.

\begin{remark}
For each proposed value $a=A_i$, we need to be able to compute $t(a)$; this can be either a minor cost, a major cost, or an absolute impediment, making probabilistic divide-and-conquer infeasible. All of these variations occur in the context of integer partitions, and will be discussed in Sections
\ref{sect divide} -- \ref{sect d(z)}, and again in Section \ref{sect cost}; see also Remark~\ref{X}.
\end{remark}

\begin{remark}
\noindent {\bf Hard versus soft rejection.} Adopting the language of information theory, where soft-decision decoders are contrasted with hard-decision decoders, by a \emph{hard rejection} function we mean an acceptance/rejection function $t : \A \to [0,1]$ whose image is actually $\{0,1\}$.  We may thus consider Algorithm~\ref{IP WTGL procedure} to be more specifically a \emph{hard} rejection sampling algorithm.  This is in contrast to Algorithm~\ref{IP PDC DSH procedure}, which may be considered a \emph{soft} rejection sampling algorithm.  

\end{remark}

\section{Algorithms for simulating random integer partitions of $n$}
\label{Algorithms Section}

 
The computational analysis that follows uses an informal adaptation of \emph{uniform} costing; see Section \ref{sect cost}.  Some elements of the analysis, specifically asymptotics for the acceptance rate, 
will be given rigorously, for example in Theorems \ref{lucky theorem}, \ref{b=1 theorem}, and \ref{recursive theorem}.

An integer partition, of size $n$, is a multiset of positive integers, with  sum $n$.  We denote the number of partitions of a given size~$n$ by $p(n)$. 
We typically denote the parts of an integer partition by $\lambda_1 \geq \lambda_2 \geq \ldots \geq \lambda_\ell>0$, where $\ell$ is the number of parts in the partition, which varies from $1$ to $n$.  Instead of listing the part sizes, we can instead record the multiplicities $c_1, \ldots, c_n$ of each number $1, \ldots, n$ in the integer partition.  

For example, the integer partition of size~12 with parts $(5, 3, 2, 1, 1)$ can be equally described by the $12$--tuple $(c_1,\ldots,c_{12})  = (2, 1, 1, 0, 1,0,\ldots,0)$, where $c_i$ denotes the number of parts of size $i$.  
All partitions of a given size $n$ satisfy $\sum_{i=1}^n  i\, c_i = n$, and this motivates the probabilistic formulation, given in Section~\ref{sect grand canonical}, via formulas~\eqref{def T}~and~\eqref{prob lambda}. 

\subsection{For baseline comparison: table methods}\label{sect table} 
A natural algorithm for the generation of a random integer partition 
is to find the largest part first, then the second largest part, and so on.
\ignore{ according to a distribution of largest part
\footnote{For the largest part, we are are dealing with \emph{unrestricted} partitions of $n$, but for subsequent parts, the problem involves the largest part of a partition of an integer $m$ into parts of size at most 
 $k$.}.} The main cost associated with this method is the storage of all the distributions.
 Some details follow.
 
Let $p(\leq k, n)$ denote the number of partitions of $n$ with each part of size $\leq k$, so that $p(\le n,n) = p(n)$.  These can
be quickly calculated from the recurrence
\begin{equation}
\label{k parts recurrence}
p(\leq k, n) = p(\leq k-1,n) + p(\leq k,n-k),
\end{equation}
where the right hand side counts the number of partitions without any $k$'s plus the number of partitions with at least one $k$.  
\ignore{
Let $X_i$ denote the $i$-th largest part of a randomly generated partition, so $\lambda = (X_1,X_2,\ldots)$.  We have
$$
\p(X_1 \leq i) = p(\le i,n)/p(n),
$$ 
$$
\p(X_2 \leq i|X_1 = k) = p(\leq i,n-k)/p(\leq k, n-k),
$$
$$
\p(X_3 \leq i|X_1 = \ell,X_2=k) = p(\leq i,n-\ell-k)/p(\leq k, n-\ell-k),
$$
and so on.
 The first line on display above deals with \emph{unrestricted} partitions of $n$; subsequent lines involve partitions of an integer $j$ into parts of size at most 
 $k$.
 }
 If only a few partitions are desired, the quantities on the right--hand side above can be computed as needed via the recursion given in Equation \eqref{k parts recurrence} and normalized to form a probability distribution.  However,
 rather than computing each quantity as it appears, an $n$-by-$n$ table, whose $(i,j)$ entry is  $p(\leq i, j)$, can be computed and stored. The generation of random partitions is extremely fast once this table has been created.  
\ignore{There is a variation to economize on storage\footnote{Since the largest part of a random partition is extremely unlikely to exceed
 $O(\sqrt{n}\log n)$ (by \cite{fristedt}), one can get away using a table of size $O(n^{3/2}\log n)$, which will only rarely need to be augmented.
 Specifically, writing $\lambda_1'$ for the largest part of a partition, with $c=\pi/\sqrt{6}$, for fixed $A>0$, with $i_0 = i_0(n,A) := \sqrt{n} \log(A \sqrt{n}/c)/c$, 
$\p_n(\lambda_1' \ge i_0) \to 1-\exp(-1/A)$.  For example, take $A=1000$, so that the $i_0$ by $n$ table will need to be augmented with probability approximately $.001$.  With $M$ words of memory available for the table, we solve $i_0(n,A) \times n = M$;  for example, with $M=2^{28}$ and $A=1000$ we have $n=15000 \doteq 2^{17}$ and $i_0=1700$, and increasing $M$ to $2^{37}$ gets us up to $n=9\times 10^6 \doteq 2^{23}$, $i_0=15000$.  Instead of actually augmenting the table, one could treat the roughly one out of every $A$
computationally difficult cases as \emph{deliberately missing data}, as in Section \ref{sect missing}.
}
and another variation to improve the number of lookups.\footnote{Using \eqref{k parts recurrence} directly, there is one lookup for each part of the random partition, by \cite{lehner,fristedt},
 this is order of $\sqrt{n} \, \log n$ lookups.  An easy variation simultaneously finds the multiplicity of the largest part, improving the order of lookups to order of  $\sqrt{n}$.
}
 }

An algorithm for simulating a random partition, based on Euler's identity $n p(n) = \sum_{d,j \ge 1} d p(n-d\,j)$, is given in
\cite{NWarticle,NWbook}, and cited as ``the recursive method.''   
We haven't found a clearcut complexity analysis in the literature, although \cite{Zimmermann} comes close.  But we believe that this algorithm is less useful than the $p(\le k,n)$ table method --- sampling from the distribution on $(d,j)$ implicit in $n p(n) = \sum_{d,j \ge 1} d\, p(n-d\, j)$ requires computation of partial sums;  if all the partial sums for $ \sum_{d,j \ge 1, dj \le m} d\, p(m-d\, j)$ with $m \le n$ are stored in a table, the total storage requirement is of order $n^2 \log n$, and if they aren't stored, computing the values, as needed, becomes a bottleneck.

\subsection{Rejection sampling}\label{sect grand canonical}

Fristedt \cite{fristedt} observed that, for any choice $x \in (0,1)$, if $Z_i \equiv Z_i(x)$ has the geometric distribution given by
\begin{equation}\label{dist Z}
 \p(Z_i=k) \equiv  \p_x(Z_i=k)  = (1-x^i) \ (x^i)^k, \ k=0,1,2,\ldots,
\end{equation}
with $Z_1,Z_2,\ldots$ independent, and $T$ is defined by  
\begin{equation}\label{def T}
    T= Z_1 + 2 Z_2 + \cdots,
\end{equation}
then, conditional on the event $\{T=n\}$, the partition $\lambda$ having $Z_i$ parts of size $i$, for $i=1,2,\ldots$,
is uniformly distributed over the $p(n)$ possible partitions of $n$.\footnote{We write $\p_x$ or $\p$, and $Z_i$ or $Z_i(x)$ interchangeably,
depending on whether the choice of $x \in (0,1)$ needs to be emphasized, or left understood.}
 
This extremely useful observation is easily seen to be true, since for any nonnegative integers $(c_1,c_2,\ldots)$ with $c_2 + 2 c_2 + \cdots=n$, specifying a partition $\lambda$ of the integer $n$,
$$
\p(Z_i=c_i, i=1,2,\ldots) = \prod \p(Z_i = c_i) = \prod \left( (1-x^i) (x^i)^{c_i} \right) 
$$
\begin{equation}\label{prob lambda}
=  \ x^{c_1 + 2 c_2 + \cdots} \prod(1-x^i)= x^n \  \prod(1-x^i),
\end{equation}
which does not vary with the partition $\lambda$ of $n$.

The event $\{T=n\}$ is the disjoint union, over all partitions $\lambda$ of $n$, of the events whose probabilities are given
in \eqref{prob lambda}, showing that
\begin{equation}\label{prob T=n}
  \p_x(T=n) = p(n)  \ x^n \ \prod(1-x^i)  .
\end{equation}

If we are interested in random partitions of $n$, an especially effective choice for $x$, used in \cite{fristedt,shape}, is
\begin{equation}\label{def x(n)}
   x(n) = \exp( -c /\sqrt{n}),     \ \mbox{ where } c = \pi/\sqrt{6}.
\end{equation}
Under this choice, we have, as $n \to \infty$,
\begin{equation}\label{mean and var}
\frac{1}{n} \ \e_{x(n)} T \to 1, \ \ \frac{1}{n^{3/2}} \var_{x(n)} T \to \frac{2}{c};
\end{equation}
this is essentially a pair of Riemann sums, see \cite[page 106]{IPARCS}.
If we write $\sigma(x)$ for the standard deviation of $T$, then the second part of \eqref{mean and var}
says:
with $x=x(n),$ as $n \to \infty$,
\begin{equation}\label{SD}
 \sigma(x) \sim \sqrt{2/c} \ n^{3/4}.
\end{equation}
The \emph{local central limit heuristic} would thus suggest  asymptotics for $\p_x(T=n)$, and these simplify, using
\eqref{def x(n)} and \eqref{mean and var}, as follows:
\begin{equation}\label{prob T=n asym}
 \p_x(T=n) \sim \frac{1}{ \sqrt{2 \pi} \ \sigma(x)} \sim \frac{1}{2\sqrt[4]{6}\, n^{3/4}}.
 \end{equation}
 
Hardy and Ramanujan \cite{HR18} proved the asymptotic formula, as $n \to \infty$,
\begin{equation}\label{HR}
p(n) \sim \exp(2 c \sqrt{n})/ (4 \sqrt{3}n), \ \mbox{ where } c = \pi/\sqrt{6} \doteq 1.282550.
\end{equation}
The Hardy--Ramanujan asymptotics \eqref{HR} and the exact formula \eqref{prob T=n} combine to show that \eqref{prob T=n asym} does hold.

\ignore{
\begin{theorem}\label{lucky theorem}{\bf Analysis of \emph{Waiting-to-get-lucky}.}

Consider the following algorithm to generate a random partition of $n$, chosen uniformly from the $p(n) \sim 
\exp(2 c \sqrt{n})/(4 \sqrt{3}\, n)$ possibilities.  Use the distributions in \eqref{dist Z}, with parameter $x$ given by
\eqref{def x(n)}.

\begin{enumerate}
\item Propose a sample, $Z_1,Z_2,\ldots,Z_n$;  compute $T_n := Z_1 + 2 Z_2 + \cdots + n Z_n$.

\item In case $(T_n=n)$, we have \emph{got lucky}.  Report the partition $\lambda$ with $Z_i$ parts of size $i$, for $i =1$ to $n$, and stop.  Otherwise, repeat from the beginning.

\end{enumerate}
}
\begin{theorem}\label{lucky theorem}{\bf Analysis of Rejection Sampling.}

Algorithm~\ref{IP WTGL procedure} \emph{does} produce one sample from the desired distribution, and  the expected number of proposals until the algorithm terminates is asymptotically $2\sqrt[4]{6}\, n^{3/4}$.

\end{theorem}
\begin{proof}
It is easily seen that $\p_x(Z_i=0$ for all $i > n) \to 1$.
Hence, the asymptotics \eqref{prob T=n asym},
given for the infinite sum $T$, also serve for the finite sum $T_n$, in which the number of summands, along with the parameter $x=x(n)$, varies with $n$.
\ignore{It is easily seen that $\p_x(Z_i=0$ for all $i > n) \to 1$. [In detail, $\p($not $ (Z_i=0$ for all $i > n)) \le \sum_{i>n} \p(Z_i \ne 0) = \sum_{i>n} x^i < \sum_{i \ge n} x^i = x^n/(1-x) \sim x^n / (c/\sqrt{n}) =  \sqrt{n}\exp(-c \sqrt{n})/c \to 0$.]    Hence the asymptotics \eqref{prob T=n asym},
given for the infinite sum $T$, also serve for the finite sum $T_n$, in which the number of summands, along with the parameter $x=x(n)$, varies with $n$.
} 
\end{proof}

\begin{remark}\label{floating point remark}
We are \emph{not} claiming that the running time of the algorithm grows like $n^{3/4}$, but only that the number of proposals needed to get one acceptable sample grows 
like $n^{3/4}$.  
The time to propose a sample also grows with $n$.    Assigning cost 1 to each call to the random number generator, with all other operations being free,
 the cost to propose one sample grows like $\sqrt{n}$ rather than $n$; details in Section \ref{sect cost}.
  Combining with Theorem \ref{lucky theorem}, the cost of 
 the rejection sampling 
algorithm 
grows like $n^{5/4}$.  

\end{remark}

\subsection{Divide-and-conquer with a deterministic second half} \label{sect b=1}

Perhaps surprisingly,
the choice 
$A = (Z_2, \ldots, Z_n)$ and $B = Z_1$ from Algorithm~\ref{IP PDC DSH procedure} 
is excellent.  Loosely speaking, it reduces the cost of rejection sampling from  order $n^{3/4}$ to order $n^{1/4}$.

\ignore{
Recall, stage 1, for the algorithm of Section \ref{subsection basic},  was to propose (one or $m$ of) $A=(Z_{b+1},Z_{b+2},\ldots,Z_n)$, and convert the distribution of $A$ to that of $X$, using using acceptance/rejection sampling, 
then in stage 2, having observed $A=a$ to find  matching $B=(Z_1,\ldots,Z_b)$.  Here, with $b=1$, the distribution of $Y|X=a$, i.e., the distribution of $B$  conditional on $h(a,B)=1$, is trivial. All the work is in stage 1.   
} 

The analysis of the speedup relative to waiting-to-get-lucky, as defined in \eqref{speedup},  is easy.  
\begin{theorem}\label{b=1 theorem}
The speedup, as defined in \eqref{speedup}, of Algorithm~\ref{IP PDC DSH procedure}, 
relative to the rejection sampling algorithm of Algorithm~\ref{IP WTGL procedure},
is asymptotically $\sqrt{n}/c$, with $c=\pi/\sqrt{6}$.  Equivalently, the acceptance cost is
asymptotically $2 \, \pi\, \sqrt[4]{6}\, n^{1/4}$.
\end{theorem}
\ignore{\begin{proof}

We have $A =  (Z_2, \ldots, Z_n)$, $B = Z_1$, whence,
\[ \e h(a^\ast, B) = \max_\ell \p(Z_1 = \ell) = 1-x.  \]
Recall that  $x=e^{-c/\sqrt{n}}$, where $c = \pi/\sqrt{6}$.  Let $a = (z_2,\ldots,z_n)$.  We have
\begin{eqnarray*}\displaystyle 
t(a) = &
 \frac{\p(Z_1 = n-\sum_{i=2}^n i\, z_i)}{\max_\ell \p(Z_1 = \ell)} = e^{-k\, c/\sqrt{n}}, \\
\text{speedup} = & \frac{1}{\max_\ell \p(Z_1 = \ell)} = (1-x)^{-1} \sim \frac{\sqrt{n}}{c},
\end{eqnarray*}
where $k=n-\sum_{i=2}^n i\, z_i.$

\ignore{
Recall that  $x=e^{-c/\sqrt{n}}$, where $c = \pi/\sqrt{6}$. 
Let $k = n - \sum_{i=2}^n i\, z_i$, then we have
\[ t(a) = e^{-k\, c/\sqrt{n}} \]
\[ speedup = (1-x)^{-1} \sim \frac{\sqrt{n}}{c} \]
}
\end{proof}
}
\begin{proof}
We have $A = (Z_2, \ldots, Z_n)$, $B = Z_1$,
so with $a=(z_2, \ldots, z_n)$  and  $\ell = n-(2z_2 + \cdots+nz_n),$ we have
$h(a,B) = \p(Z_1=\ell)$.   Following \eqref{speedup}, and recalling that $x=e^{-c/\sqrt{n}}$, where $c = \pi/\sqrt{6}$, we have
$$
\text{speedup} =\frac{1}{\max_\ell \p(Z_1 = \ell)} = \frac{1}{ \p(Z_1 = 0)} = \frac{1}{1-x} \sim \frac{\sqrt{n}}{c}.
$$
Combined with the asymptotic rejection cost for  Algorithm~\ref{IP WTGL procedure},
as given by Theorem \ref{lucky theorem}, this yields the claimed asymptotic acceptance
cost for  Algorithm~\ref{IP PDC DSH procedure}.
\end{proof}



 To review, step 1 of  Algorithm~\ref{IP PDC DSH procedure}
 is to simulate $(Z_2,Z_3,\ldots,Z_n)$, and accept it with probability \emph{proportional to} the chance that
$Z_1 = n-(2Z_2+\cdots + n Z_n)$.   
The speedup shows the brilliance of the idea in \cite{Rejection}: conditional on accepting a value $(z_2, \ldots z_n)$, we are done;  the distribution $\L(Z_1|Z_1 + \sum_{i=2}^n i\, z_i = n)$ is trivial,
so there is no need to ``wait for a lucky $Z_1$".
 Algorithm~\ref{IP PDC DSH procedure} is a soft rejection sampling algorithm;  it might be more appropriate to call it a
\emph{soft acceptance} sampling algorithm.
In contrast, the hard rejection in Algorithm  ~\ref{IP WTGL procedure}
can be viewed as  simulating  $(Z_2,Z_3,\ldots,Z_n)$ and then \emph{simulating} $Z_1$ to see whether or not $Z_1 = n-(2Z_2+\cdots + n Z_n)$;  if $Z_1$ does not have the correct value, then one resamples $(Z_2,Z_3,\ldots,Z_n)$, (and then  $Z_1$), until finally getting lucky.

\subsubsection{Examples: $k$-cores, set partitions, and plane partitions}\label{additional examples}
The PDC using a deterministic second half is remarkably robust and powerful.  It is robust in that it applies, and gives a nontrivial speedup, for a wide variety of simulation tasks.  

For a first additional example, there is much recent interest in $k$-cores, see \cite{kcore}.  There is an easy-to-calculate bijection between $k$-cores and integer partitions with $\lambda_1<k$.    The natural way to simulate partitions of $n$ with $\lambda_1 < k$, following Fristedt's method from Section \ref{sect grand canonical}, is to use
independent $Z_1,\ldots,Z_{k-1}$, but, instead of defining $x=x(n)$ as in \eqref{def x(n)}, we instead 
solve numerically for $x=x(k,n)$ which satisfies $n = \sum_{1 \le i < k} \e i Z_i =
\sum_{1 \le i < k} i\, x^i/(1-x^i)$.   Using $A=(Z_2,\ldots,Z_{k-1})$ and $B=Z_1$, the optimal threshold function is explicitly given by $\p(Z_1 = m) / \max_j\p(Z_1 = j)$.  
The speedup, relative to 
rejection sampling, 
is
$1/(1-x(k,n))$ --- exactly the same form as in Theorem \ref{b=1 theorem}, except that we no longer have the asymptotic analysis that
$1/(1-x) \sim \sqrt{n}/c$.  

For a second example, which shows that a good deterministic second half is not always the first component, consider the integer partition underlying a random set partition, where
all 
set partitions 
are equally likely.  See \cite[Section 10.1]{IPARCS}, for more details;  the summary is that with $x=x(n)$ to solve $xe^x=n$, so that $x \sim \log n$, we want independent $Z_i$ where $Z_i$ is Poisson distributed with parameter $\lambda_i=x^i/i!$, and getting lucky is to have $\sum_{i=1}^n i Z_i = n$.  For the deterministic second half, we have $B=Z_{j}$, where we pick $j$ to maximize the speedup. 
Thus we want to minimize
$\max_k \p(Z_j=k)$, i.e., to maximize $\lambda_j$, so we take $j=\lfloor x \rfloor$.  Since $j = x + o(\sqrt{x})$, we have $\lambda_{j}=x^j/j! \sim e^x/\sqrt{2 \pi x}$, and since $e^x=n/x$, we have $\lambda_j \sim n/\sqrt{2 \pi x^3}$.  
As in the previous paragraph, again the optimal threshold function is given by an explicit expression which is easy to evaluate. 
The speedup factor is $1/\max_k \p(Z_{j}= k)$, which is asymptotic to $\sqrt{2 \pi \lambda_j} \sim (2\pi)^{1/4}\sqrt{n} / x^{3/4}$.  Pittel showed \cite{pittelset} that the expected number of proposals for 
rejection sampling 
is asymptotic to $ \sqrt{2 \pi n (x+1)} $.  Hence the expected number of proposals using the deterministic second half PDC is asymptotically $ x^{5/4} / (2\pi)^{1/4}$, which is $O( \log^{5/4}(n) )$.  

\ignore{
This example provides an 
 illustration of \emph{soft} versus \emph{hard} rejection, from the end of Section
\ref{sect acceptance}.  
Recall that $T_n:=\sum_{1}^n i Z_i$ and we are trying to get an instance of $(Z_1,Z_2,\ldots,Z_n)$ for which $T_n=n$.  
When a value $a$ for $A=  (Z_1,Z_2,\ldots,Z_{j-1},Z_{j+1},Z_{j+2},\ldots,Z_n)$ has been proposed, we know the 
weighted sum of $A$, 
say 
\begin{equation}\label{set color}
 T_A = \sum_{i: i \ne j} i Z_i =  T_n - j Z_j, 
\end{equation}
and
$B$ satisfies $h(a,B)=1$ if and only if $j Z_j = n-T_A$.   Much of the time, we can hard-reject $A=a$ since either $T_A>n$  or $(n-T_A)/j$ is not an integer.  Computing roughly, $\p(T_A \le n, (n-T_A)/j \in \BZ) \doteq \p(T_A \le n) \times \p ((n-T_A)/j \in \BZ) \doteq
\frac{1}2 \times \frac{1}j \sim 1/(2x)$.
The overall acceptance cost  is at least as great as the acceptance cost of evading the hard rejection;  here we have made the plausiblity check on the asymptotics,  with $ x^{5/4} / (2\pi)^{1/4} \ge 2x$.  There is an interplay between these hard rejection thresholds and total variation distance; see \cite[ equations (35) and (36)]{IPARCS}.
}

For a third example, we consider a sampling procedure in \cite{bodini}, in which a random plane partition is generated in two stages.
Stage 1 is rejection sampling, 
 where the proposal is an array of  independent geometrically distributed random variables $\{Z_{i,j}\}$, $0 \leq i, j < n$.\footnote{With $\p(Z_{i,j}\ge k)=(x^{i+j+1})^k$ and $x=x(n) = \exp(-(2 \zeta(3)/n)^{1/3})$, akin to \eqref{dist Z} and \eqref{def x(n)}.}   To get an instance of weight $n$, the expected number of proposals, using
 rejection sampling, 
  is of order $n^{2/3}$. There is an algorithm, to make the proposal, whose work is of the same order as the entropy lower bound, which is order of $n^{2/3}$, so the cost for stage 1 is order of $n^{4/3}$.
Stage 2 is to apply a bijection due to Pak \cite{Pak}, which, as implemented in \cite{bodini} takes order of $n \log^3(n)$.  
Combined, \cite{bodini} has order $n^{4/3}$ to name a random ensemble of weight $n$, plus order $n \log^3n$ to implement the bijection, so the order $n^{4/3}$ first stage dominates the computation.  Using our deterministic second half  PDC, with $B= Z_{0,0}$, we obtain a speedup of order  $n^{1/3}$, bringing the cost of stage 1 
to $O(n)$, and hence bringing the total cost of the algorithm down to the cost of implementing the bijection, viz., order of $n \log^3n$.  

 \subsection{Divide-and-conquer,  by small versus large} \label{sect divide}
\ignore{
Rejection sampling 
is limited primarily by the probability that the target is hit, which diminishes like $n^{-3/4}$.  Already at $n=10^8$, the probability is one in a million.  
Instead of trying to hit a hole in one,  we allow approach shots.   

Recall that to sample partitions uniformly, based on  \eqref{dist Z} -- \eqref{prob T=n asym},      our goal is to sample from the distribution of $(Z_1,Z_2,\ldots,Z_n)$ conditional on $\{T=n\}$.  
}
We now return to the example of random sampling of integer partitions. 
Using  $x=x(n)$ from  \eqref{def x(n)}, for any fixed choice
 $b \in \{1,2,\ldots,n-1\}$, we let 
\begin{equation}\label{b}
A = (Z_{b+1},Z_{b+2},\ldots,Z_{n}), \ \ \ B = (Z_1,\ldots,Z_b),
\end{equation}
noting that one extreme case, $b=1$, was handled in Section~\ref{sect b=1}.
   With 
\begin{equation}\label{def TAB}
T_A = \sum_{i=b+1}^n i Z_i, \ \ \ \ T_B = \sum_{i=1}^b i Z_i, 
\end{equation}
we want to sample from
$(A,B)$ conditional on $(T_A+T_B=n)$.  The standard paradigm for deterministic divide-and-conquer, that the two tasks should be roughly equal, 
suggests $b$ of order   $\sqrt{n}$.  

\ignore{
We have $A,B$ independent, and we use $h(A,B)=1(T_A+T_B=n)$.
The divide-and-conquer strategy, according to Lemma \ref{lemma 1},  is to sample  $X$ from the  distribution of $A$ biased by the probability that an independently chosen $B$ will make a match, and then, having observed $(A=a)$, sample $Y$ from the distribution of  $B$ conditional on $(h(a,B)=1)$.
}

 In order to simulate $X$, we will use rejection sampling, as reviewed in Section \ref{sect acceptance}.  
 To find the optimal rejection probabilities, we want the largest $C$ such that 
$$
C \max_{j}\frac{\p(T_B = n-j)}{\p(T_n=n)} \leq 1,
$$
or equivalently,
\begin{equation}\label{C}
C = \frac{\p(T_n=n)}{\max_j \p(T_B=j)}.
\end{equation}

Once an $A$ has been accepted, and we have our target $n-T_A$ for the sum $T_B$, we simply propose 
$B = (Z_1, Z_2, \ldots, Z_b)$ until $Z_1+2Z_2+\cdots+bZ_b=n-T_A$; then the pair $(A,B)$ is our random partition.


\begin{remark}
\label{b remark}
The values $\p(T_B=j)$ for $j=0,1,\ldots,n$ can be  computed using the recursion 
\eqref{k parts recurrence};   what we really have is a variant of the $n$-by-$n$ table method of Section
\ref{sect table}, in which the table is $b$ by $n$. The computation time for the entire table is $b \, n$.  However, one only needs to store the current and previous row, (or with overwriting, only the current row), so the storage is $n$.  Once we have the last row of the $b$ by $n$ table, we can easily find $C$ and indeed the entire threshold function $t$.  
\end{remark}

\subsection{Self-similar, recursive probabilistic divide-and-conquer: $p(z) = d(z)p(z^2)$} \label{sect d(z)}
The methods of Sections \ref{sect grand canonical} -- \ref{sect b=1} have acceptance costs that go to infinity with $n$.  We now demonstrate a recursive probabilistic divide-and-conquer algorithm that has an asymptotically constant acceptance cost.

A well-known result in partition theory is
\begin{equation}
\label{pz squared}
p(z) = \prod_i (1-z^i)^{-1} =\left(\prod_i 1+z^i\right)  \left(\prod_i \frac{1}{1-z^{2i}}\right) =d(z) p(z^2),
\end{equation}
where $d(z) = \prod_i 1+z^i$ is the generating function for the number of partitions with distinct parts, and $p(z^2)$ is the generating function for the number of partitions where each part has an even multiplicity.  This can of course be iterated to, for example, $p(z) = d(z)d(z^2)p(z^4)$, etc., and this forms the basis for a recursive algorithm.

 Recall from \eqref{dist Z} in Section \ref{sect grand canonical}, that  each $Z_i \equiv Z_i(x)$ is geometrically distributed with 
$\p(Z_i \ge k) =x^{ik}$.  The \emph{parity bit} of $Z_i$, defined by
$$ \epsilon_i = 1(Z_i \mbox{ is odd}),
$$
is a Bernoulli random variable $\epsilon_i \equiv \epsilon_i(x)$, with 
\begin{equation}\label{def epsilon}
\p(\epsilon_i(x)=1) = \frac{x^i}{1+x^i}, \ \ \p(\epsilon_i(x)=0) = \frac{1}{1+x^i}.
\end{equation}
Furthermore, $(Z_i(x) -\epsilon_i)/2$ is geometrically distributed, as $Z_i(x^2)$, again in the notation \eqref{dist Z}, and
$(Z_i(x) -\epsilon_i)/2$ is independent of $\epsilon_i$.  What we really use is the converse:  with $\epsilon_i(x)$ as above, independent of $Z_i(x^2)$, the $Z_i(x)$ constructed as
$$
Z_i(x) := \epsilon_i(x) + 2 Z_i(x^2), \ \ \ i=1,2,\ldots
$$
indeed has the desired geometric distribution.
\ignore{ FOR NOW
The weighted sum of the $A$ phase, namely, $T_A$, has expectation 
$$
\e T_A = \sum_{i=1}^n \frac{ix^i}{1+x^i},
$$
 which, according to \cite[page 107]{IPARCS}, was shown to be asymptotic to $n/2$ for the usual choice of $x=\exp(-c/\sqrt{n})$, with standard deviation $\sqrt{2}\, \sigma(x)$ (see Equation \eqref{SD}).
}

\begin{theorem}\label{recursive theorem}

The asymptotic acceptance cost for one step of the recursive divide-and-conquer algorithm using $A=(\epsilon_1(x),\epsilon_2(x),\ldots)$, $B = ((Z_1(x)-\epsilon_1)/2,(Z_2(x)-\epsilon_2)/2,\ldots)$, is $\sqrt{8}$.
\end{theorem}

\begin{proof}
The acceptance cost $1/C$ can be computed via \eqref{C} and \eqref{prob T=n asym}, with 
\begin{eqnarray}
\nonumber C &=&   \frac{ \p_x(T=n)}{\max_k \p_{x^2}(T = \frac{n-k}{2})} = 
\frac{\p_x(T=n)}{\max_k \p_{x^2}(T = k)} \sim \frac{\frac{1}{\sqrt{2\pi}\, \sigma(x)}}{\frac{1}{\sqrt{2\pi}\, \sigma(x^2)}}\\
\nonumber  & \sim & \frac{ n^{-3/4}}{(n/4)^{-3/4}} = 4^{-3/4} = 8^{-1/2}.
\end{eqnarray}
\end{proof}

In effect, the recursive algorithm is to determine the $(Z_1,Z_2,\ldots,Z_n)$ conditional on $(Z_1+2Z_2+\ldots=n)$, by finding the binary expansions:  first the 1s bits of all the $Z_i$s, then the 2s bits, then the 4s bits, and so on. 
Note that the rejection probabilities can be computed in time which is little-o of the cost to generate the random bits; see Remark~\ref{X} and Section~\ref{sect cost}. 

\ignore{ 
With a little more detail:  to start, with $A = (\epsilon_1(x),\epsilon_2(x),\ldots)$ and $T_A = \sum_i i \, \epsilon_i$, we have 
$\e T_A = \sum_{i=1}^n \frac{ix^i}{1+x^i} \sim n/2$, and it can be shown that, even after conditioning on acceptance, the distribution of $T_A$
is concentrated around $n/2$. Since $B = ((Z_1(x)-\epsilon_1)/2,(Z_2(x)-\epsilon_2)/2,\ldots)$ is equal in distribution to  $(Z_1(x^2), Z_2(x^2), \ldots)$, and target
$n'=(n-T_A)/2$, we see that the $B$ phase is to find a partition of the integer $n'$, uniform over the $p(n')$ possibilities.   In carrying out the B task we simply use
$x(n')$ as the parameter, but the choice
$(x(n))^2$ would also work.    
} 

\subsubsection{Exploiting a parity constraint} 
\label{sect parity}

Theorem \ref{recursive theorem} states that the asymptotic acceptance cost for  proposals  of $A = (\epsilon_1(x),\epsilon_2(x),\ldots)$ is $2 \sqrt{2}$, and this already takes into account an obvious lower bound of 2, since the parity of
$T_A =\epsilon_1+\epsilon_2+\cdots + \epsilon_n$ is nearly  equally distributed over \{odd, even\}, and rejection is guaranteed 
if $T_A$ does not have the same parity as $n$.  An additional speedup is attainable by moving $\epsilon_1$ from the $A$ side to the $B$ side:  instead of simulating $\epsilon_1$, there now will be a trivial task, just as there was for $Z_1$ in the ``$b=1$" procedure of Section \ref{sect b=1}.  That is,  we switch to $A=(  \epsilon_2(x),\epsilon_3(x),\ldots)$ 
and $B = (\epsilon_1(x),(Z_1(x)-\epsilon_1)/2,(Z_2(x)-\epsilon_2)/2,\ldots)$; the  parity of the new $T_A$ dictates, deterministically, the value of the first component of $B$, under the conditioning on $h(a,B)=1$.  The
rejection probabilities for a proposed $A$ are like those in Theorem \ref{recursive theorem}, but with an additional factor of $1/(1+x)$ or
$x/(1+x)$,  depending on the parity of $n+\epsilon_2+\cdots + \epsilon_n$.  Since $x=x(n) \to 1$ as $n \to \infty$, these two factors both tend to 1/2, so the constant $C$ as determined by \eqref{C} becomes, asymptotically, twice as large.
 
 \begin{theorem}\label{prop root 2}
The asymptotic acceptance cost for one step of the recursive divide-and-conquer algorithm using $A=(\epsilon_2(x),\epsilon_3(x),\ldots)$  and   $B = (\epsilon_1(x),(Z_1(x)-\epsilon_1)/2,(Z_2(x)-\epsilon_2)/2,\ldots)$ is $\sqrt{2}$.
\end{theorem}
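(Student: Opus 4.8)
The plan is to follow the proof of Theorem~\ref{iterative theorem} almost verbatim, inserting the single extra factor produced by the parity constraint on $\epsilon_1$. As in the paragraph around \eqref{pz squared}, write $Z_i(x)=\epsilon_i(x)+2Z_i(x^2)$ with all the $\epsilon_i(x)$ and $Z_i(x^2)$ independent, so the total weight splits as $T=\sum_i iZ_i(x)=\sum_i i\epsilon_i+2\sum_i iZ_i(x^2)$. With $A=(\epsilon_2,\epsilon_3,\ldots)$ and $B=(\epsilon_1,(Z_1-\epsilon_1)/2,(Z_2-\epsilon_2)/2,\ldots)$ the corresponding partial weights are $T_A=\sum_{i\ge2}i\epsilon_i$ and $T_B=\epsilon_1+2T'$, where $T':=\sum_{i\ge1}iZ_i(x^2)$ is independent of $\epsilon_1$ and has the distribution of $T$ under parameter $x^2$; also $A\perp B$ and $h(A,B)=1(T_A+T_B=n)$, so $p=\e h(A,B)=\p_x(T=n)$.

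First I would evaluate $\max_j\p(T_B=j)$. Since $T'$ takes every nonnegative integer value and $\epsilon_1\in\{0,1\}$ with $\p(\epsilon_1=0)=1/(1+x)>x/(1+x)=\p(\epsilon_1=1)$, one has $\p(T_B=j)=\tfrac1{1+x}\,\p_{x^2}(T=j/2)$ for even $j$ and $\tfrac{x}{1+x}\,\p_{x^2}(T=(j-1)/2)$ for odd $j$; hence the maximum is attained at an even $j$ and equals $\tfrac1{1+x}\max_k\p_{x^2}(T=k)$. By \eqref{prob T=n asym} with parameter $x^2=x(n)^2=\exp(-c/\sqrt{n/4})=x(n/4)$, we get $\max_k\p_{x^2}(T=k)\sim 1/(\sqrt{2\pi}\,\sigma(x^2))$, exactly the estimate used in Theorem~\ref{iterative theorem}.

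Next I would apply \eqref{C}. The one point needing care is that the even value of $n-T_A(a)$ nearest $2\,\e T'$, where $\p_{x^2}(T=\cdot)$ is largest, is genuinely realizable as $n-T_A(a)$ for some proposed $a$: this holds because $T_A=\sum_{i\ge2}i\epsilon_i$ can hit any prescribed parity near $n/2$ --- build up roughly $n/2$ from various $\epsilon_i=1$ and toggle $\epsilon_3$ to correct the parity --- so by the local-central-limit smoothness underlying \eqref{prob T=n asym} the best achievable value of $\e h(a,B)=\p(T_B=n-T_A(a))$ is asymptotic to $\max_j\p(T_B=j)$. Consequently
\begin{equation*}
C=\frac{\p_x(T=n)}{\max_j\p(T_B=j)}\;\sim\;(1+x)\,\frac{\p_x(T=n)}{\max_k\p_{x^2}(T=k)}\;\sim\;(1+x)\,\frac{\sigma(x^2)}{\sigma(x)}.
\end{equation*}

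Finally, substitute: $1+x\to2$, and by \eqref{SD}, $\sigma(x^2)/\sigma(x)=\sigma(x(n/4))/\sigma(x(n))\sim(n/4)^{3/4}/n^{3/4}=4^{-3/4}=2^{-3/2}$, so $C\sim2\cdot2^{-3/2}=2^{-1/2}$ and the acceptance cost is $1/C\sim2^{1/2}=\sqrt2$. The main obstacle is the middle step: confirming that shifting $\epsilon_1$ onto the $B$ side contributes exactly the factor $1/(1+x)\to1/2$ to $\max_j\p(T_B=j)$, and in particular that the maximizing $j$ is even and lies within the range of $n-T_A$; once that is granted, the computation is a line-by-line copy of the proof of Theorem~\ref{iterative theorem}.
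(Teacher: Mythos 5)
Your proposal is correct and follows essentially the same route as the paper: repeat the computation of Theorem \ref{iterative theorem} via \eqref{C}, with the single change that moving $\epsilon_1$ to the $B$ side multiplies the quantity under the maximum by a parity factor ($1/(1+x)$ or $x/(1+x)$) tending to $1/2$, which doubles $C$ and gives acceptance cost $\sqrt{2}$. The paper states this even more briefly, writing the extra factor as $\p(2 \mid n-k+\epsilon_1(x)) \sim 1/2$ for every $k$ rather than splitting into even and odd targets, so your added care about which parity attains the maximum and its attainability by $n-T_A$ is a harmless refinement, not a different argument.
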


\begin{proof}
The acceptance cost $1/C$ can be computed, as in the proof of Theorem \ref{recursive theorem}, with the only change being that in the display for computing $C$, the expression under the  $\max_k$, which was $\p(T(x^2) = (n-k)/2)$ changes to  
$$ \p\left(2 | n-k +\epsilon_1(x) \mbox{ and }T(x^2) = \left\lfloor \frac{n-k}{2}\right\rfloor\right) $$
$$ =\p(  2 | n-k +\epsilon_1(x)   ) \times \p\left(T(x^2) =   \left\lfloor\frac{n-k}{2}\right\rfloor\right)  \sim \frac{1}{2}  \p\left(T(x^2) =   \left\lfloor\frac{n-k}{2}\right\rfloor \right) .
$$
 \end{proof}


\subsubsection{The overall cost of the main problem and all its subproblems}

Informally, for the algorithm in the previous section, the main problem has size $n$ and acceptance cost $\sqrt{2}$, applied to a proposal cost asymptotic to $c_0 \sqrt{n}$, for a net cost $\sqrt{2}\, c_0 \sqrt{n}$.  The first subproblem has random size, concentrated around $n/4$, and hence half the cost of the main problem.  The sum of a geometric series with ratio $1/2$ is twice the first term, so the net cost of the main problem and all subproblems combined is $2 \sqrt{2}\, c_0 \sqrt{n}$.

In framing a theorem to describe this, we try to allow for a variety of costing schemes.  We believe that the first sentence in the hypotheses of Theorem
\ref{theorem sum of series} is valid, with $\theta=1/2$, for the scheme of
Remark \ref{floating point remark}.  The second sentence, about costs of tasks other than proposals, is trivially true for the scheme of Remark \ref{floating point remark}, but may indeed be false in
costing schemes which assign a cost to memory allocation, and communication.

\begin{theorem}\label{theorem sum of series}
\emph{Assume} that the cost $C(n)$ to propose the
$A = (\epsilon_2(x),\epsilon_3(x),\ldots)$ in the first step of the
algorithm of Section \ref{sect parity} is given by a deterministic function with $C(n)  \sim c_0  n^\theta$ for some positive constant $c_0$ and constant $\theta \ge 1/2$, or even more generally, $C(n) =  n^\theta$ times a slowly varying function of $n$.   Assume that the cost of all steps of the algorithm, other than making proposals,\footnote{such as the arithmetic to compute acceptance/rejection thresholds, the generation of the random numbers used in those acceptance/rejection steps, and merging the accepted proposals into a single partition.} is relatively negligible, i.e., $o(C(n))$.  
Then, the asymptotic cost of the 
entire algorithm is
$$\frac{1}{1-1/4^\theta} \, \sqrt{2}  C(n) \le 2 \,  \sqrt{2}\, C(n) .
$$
\end{theorem}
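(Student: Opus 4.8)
The plan is to unroll the recursion into a sum over levels, identify each level's cost, and sum the resulting geometric series. Write $N_0=n$ and let $N_j$ be the size of the partition passed to the $j$-th recursive call, so that $N_{j+1}=\tfrac12(N_j-T_A^{(j)})+O(1)$, where $T_A^{(j)}=\sum_{i\ge 2} i\,\epsilon_i$ is the weighted sum of the accepted $A$-proposal at level $j$ (cf.\ the discussion after Theorems~\ref{iterative theorem} and \ref{prop root 2}); the recursion is run until $N_j$ falls below a fixed threshold, where a partition of that bounded size is produced directly in $O(1)$ time, so there are $\Theta(\log n)$ levels in all. With $P_j$ the (random) number of $A$-proposals made at level $j$, the total cost is
$$
\mathrm{Cost}=\sum_{j\ge0}P_j\,C(N_j)+\big(\text{all non-proposal costs}\big),
$$
and the second term is $o(C(n))$ by hypothesis. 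Since $C$ is deterministic, $C(N_j)$ is fixed given $N_j$, and by Theorem~\ref{prop root 2}, $\e[P_j\mid N_j]=\kappa(N_j)$, the acceptance cost of a size-$N_j$ problem, with $\kappa(m)\to\sqrt2$ as $m\to\infty$ and $1\le\kappa(m)\le K<\infty$ for all $m$ (the lower bound is trivial; the upper bound holds because $\kappa(m)$ is a ratio of two partition-sum point probabilities, each of order $m^{-3/4}$ for the relevant parameter, and only finitely many small $m$ arise). Hence $\e[\mathrm{Cost}]=\sum_{j\ge0}\e\!\left[\kappa(N_j)\,C(N_j)\right]+o(C(n))$.

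The crux is the concentration $N_j/n\to 4^{-j}$ in probability, for each fixed $j$, \emph{under the law conditioned on acceptance at every earlier level}. I would prove this by induction on $j$: given $N_j\to\infty$, the parameter is $x=x(N_j)$ and $\e\,T_A^{(j)}=\sum_{i\ge 2} i\,x^i/(1+x^i)\sim N_j/2$, while $T_A^{(j)}$ fluctuates on scale $\sqrt2\,\sigma(x)=\Theta(N_j^{3/4})$, cf.\ \eqref{SD}; conditioning on acceptance multiplies the law of $T_A^{(j)}$ by a factor that is itself a partition-sum point probability --- a concentrated bump of width $\Theta(N_j^{3/4})$ centred near $N_j/2$ --- so the conditioned law of $T_A^{(j)}$ is still concentrated at $N_j/2$ on scale $o(N_j)$, whence $N_{j+1}=\tfrac14 N_j\,(1+o_p(1))$, and the claim propagates. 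Granting the concentration, the regular variation $C(n)=n^\theta L(n)$ ($L$ slowly varying) and the uniform convergence theorem give $C(N_j)/C(n)\to 4^{-j\theta}$ in probability, while Potter's bounds furnish the deterministic domination $C(N_j)/C(n)\le 2\,(N_j/n)^{\theta-\delta}\le 2\cdot 2^{-j(\theta-\delta)}$ once $n$ is large (using $N_j\le 2^{-j}n$ and any fixed $\delta\in(0,\theta)$), which is summable in $j$. Bounded convergence (for fixed $j$) together with $\kappa(N_j)\to\sqrt2$ gives $\e[\kappa(N_j)C(N_j)]\sim\sqrt2\,C(n)\,4^{-j\theta}$, and dominated convergence over the index $j$ (dominating sequence $2K\cdot 2^{-j(\theta-\delta)}$) yields
$$
\e[\mathrm{Cost}]\;\sim\;\sqrt2\,C(n)\sum_{j\ge0}4^{-j\theta}\;=\;\frac{\sqrt2}{1-4^{-\theta}}\,C(n).
$$
Finally $\theta\ge1/2$ forces $4^{-\theta}\le1/2$, so $(1-4^{-\theta})^{-1}\le2$ and $\e[\mathrm{Cost}]\le 2\sqrt2\,C(n)$.

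I expect the main obstacle to be the rigorous concentration of $N_j$ under the acceptance-conditioned law (and the consequent uniform integrability needed to exchange $\e$ with the infinite sum over levels). In particular one must handle the $O(\log\log n)$ bottom levels of the recursion, where neither the $\kappa(m)\to\sqrt2$ asymptotics of Theorem~\ref{prop root 2} nor the regular-variation estimate for $C$ is in force; there the total cost is only $O(\mathrm{polylog}\,n)$, which is $o(C(n))$ since $\theta\ge1/2$, so it is safely absorbed into the error term.
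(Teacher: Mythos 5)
Your proposal is correct and follows essentially the same route as the paper's proof: concentration of $T_A$ around $n/2$ (hence $N_{j+1}/N_j\to 1/4$ in probability) made robust under acceptance-conditioning because the acceptance probability is bounded away from zero by Theorem \ref{prop root 2}, the deterministic bound $N_{j+1}\le N_j/2$ used for domination, and regular variation of $C$ to sum the geometric series and pass from convergence in probability to convergence of expected cost. The only differences are cosmetic: you carry the per-level proposal count $P_j$ and the $\sqrt2$ factor explicitly (the paper folds this in tersely), and where you argue via the shape of the acceptance-tilting ``bump,'' the paper uses the simpler bound $\p(\,\cdot\mid G)\le \p(\,\cdot\,)/\p(G)$.
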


\begin{proof}
The key place to be careful is in the distinction between the distribution of a proposed $A= (\epsilon_2(x),\epsilon_3(x),\ldots)$, and the distribution after rejection/acceptance.  For proposals, in which the $\epsilon_i$ are independent, with $T_A :=\sum_2^n i \, \epsilon_i(x)$ and $x=x(n)$ from \eqref{def x(n)}, calculation gives $\e T_A \sim n/2$ and $\var(T_A) \sim (1/c) n^{3/2}$.  Chebyshev's inequality for being at least $k$ standard deviations away from the mean, to be used with $k=k(n) = o(n^{1/4})$ and $k \to \infty$, gives $\p( |T_A - \e T_A | > k \, \SD(T_A) ) \le 1/k^2$.

Now consider the good event $G$ that a proposed $A$ is accepted;  conditional on $G$, the $\epsilon_i$ are no longer independent.  But the upper bound from Chebyshev is robust, with $\p( |T_A - \e T_A | > k \, \SD(T_A) | G ) \le 1/(k^2 \, \p(G))$.  Since $\p(G)$ is bounded away from zero, by Theorem \ref{prop root 2}, we still have an upper bound which tends to zero, and shows that $(n-T_A)/2$, divided by $n$, converges in probability to $1/4$. 

Write $N_i \equiv N_i(n)$ for the random size of the subproblem at stage $i$, starting from $N_0(n)=n$.  The previous paragraph showed that for $i=0$, $N_{i+1}(n)/N_i(n) \to 1/4$, where the convergence is \emph{convergence in probability}, and the result extends automatically to each fixed $i=0,1,2,\ldots$. 
We have deterministically that $N_{i+1}/N_i \le 1/2$, so in particular $N_i>0$ implies $N_{i+1} < N_i$.  Set $C(0)=0$, redefining this value if needed, so that the costs of all proposals is exactly the random 
$$
 S(n) :=  \sum_{i \ge 0} C(N_i(n)).
$$
It is then routine analysis to  use the hypothesis that $C(n)$ is regularly varying to conclude that $S(n)/C(n) \to 1/(1-4^{-\theta})$,
where again, the convergence is convergence in probability.   The deterministic bound $N_{i+1}(n)/N_i(n) \le 1/2$ implies
that the random variables $S(n)/C(n)$ are \emph{bounded}, so it also follows that $ \e S(n)/C(n) \to 1/(1-4^{-\theta})$.
\end{proof}


\subsubsection{A variation based on $p(z)=p_{odd}(z) \, p(z^2)$} \label{sect odd}
With 
$$
p_{odd}(z) := \prod_{i \mbox{ odd}}(1-z^i)^{-1},
$$
Euler's identity $d(z) = p_{odd}(z)$ suggests a variation on the algorithm of section \ref{sect d(z)}.
It is arguable whether the original algorithm, based on $p(z)=d(z) \, p(z^2)$, and the variant, based on 
$p(z)=p_{odd}(z) \, p(z^2)$, are genuinely different.

Arguing the variant algorithm is different:  the initial 
proposal is $A=(Z_1(x),Z_3(x),Z_5(x),\ldots)$.  Upon acceptance, we have determined $(C_1(\lambda),C_3(\lambda),C_5(\lambda),\ldots)$,
where $\lambda$ is the partition of $n$ that the full recursive algorithm will determine, and $C_i(\lambda)$ is the number of parts of size $i$ in $\lambda$.  The $B$ task will find $(C_2(\lambda),C_4(\lambda),C_6(\lambda),\ldots)$ by iterating the divide-and-conquer idea, so that the second time through the A procedure determines  $(C_2(\lambda),C_6(\lambda),C_{10}(\lambda),\ldots)$,  and the third time through the A procedure determines  $(C_4(\lambda),C_{12}(\lambda),C_{20}(\lambda),\ldots)$,
and so on.

Arguing that the variant algorithm is \emph{essentially} the same:  just as in Euler's bijective proof that $p_{odd}(z)=d(z)$, the
original algorithm had a proposal $A=(\epsilon_1(x),\epsilon_2(x),\ldots)$, which can be used to construct the proposal 
$(Z_1(x),Z_3(x),Z_5(x),\ldots)$  for the variant algorithm.  That is, one can check that starting with independent $\epsilon(i,x) \equiv \epsilon_i(x)$ given by \eqref{def epsilon}, for $j=1,3,5,\ldots,$ $Z_j := \sum_{m \ge 0} \epsilon(j \, 2^m,x) \ 2^m$ indeed has the distribution
of $Z_j(x)$ specified by \eqref{dist Z}, with $Z_1,Z_3,\ldots$ independent.  And conversely, one can check that starting with the
independent geometrically distributed $Z_1(x),Z_3(x),\ldots$, taking base 2 expansions yields independent 
$\epsilon_1(x),\epsilon_2(x),\ldots$ with the Bernoulli distributions specified by \eqref{def epsilon}.
Hence one \emph{could} program the two algorithms so that they are coupled:  starting with the same seed, they would produce the same $T_A$ for the initial proposal, and the same count of rejections before the acceptance for the first time through the A procedure, with same $T_A$ for that first acceptance, and so on, including the same number of iterations before finishing.   Under this coupling, the original algorithm produces a partition $\mu$ of $n$, the variant algorithm produces a partition $\lambda$ of $n$ --- and we have implicitly defined the deterministic bijection $f$ with $\lambda=f(\mu)$.

Back to arguing that the algorithms are different:  we believe that the coupling described in the preceding paragraph
supplies rigorous proofs for the  analogs of Theorems \ref{recursive theorem} and \ref{prop root 2}.  For Theorem 
\ref{theorem sum of series} however, one should also consider the computational cost of Euler's bijection, for various costing schemes, and we propose the following analog, for the variant based on $p(z)=p_{odd}(z) \, p(z^2)$, combined with the trick of moving $\epsilon_1(x)$ from the $A$ side to  the $B$ side, as in Section \ref{sect parity}:

\begin{theorem}\label{theorem sum of series variation}
\emph{Assume} that the  cost $D(n)$  to propose $(Z_1(x),Z_2(x),\ldots, Z_n(x))$, with $x=x(n)$, satisfies $D(n) =  n^\theta$ times a slowly varying function of $n$.  
Assume \emph{also} that the  cost $D_A(n)$ to propose  
$A = (Z_1(x)-\epsilon_1(x),Z_3(x),Z_5(x),\ldots)$ satisfies 
$D_A(n) \sim D(n)/2$.

Then, the asymptotic cost of the 
entire algorithm is
$$ \frac{1}{1-1/4^\theta} \,  \sqrt{2}\, D(n)/2 \le   \sqrt{2}\, D(n) .
$$
\end{theorem}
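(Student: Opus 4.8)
The plan is to mirror the proof of Theorem \ref{theorem sum of series} almost verbatim, since the variant algorithm based on $p(z)=p_{odd}(z)\, p(z^2)$ is coupled to the original (via Euler's bijection, as described in the excerpt) so that the sequence of subproblem sizes $N_i(n)$ is identical in distribution — or indeed identical, under the coupling. First I would recall that the coupling sends a proposal $A=(\epsilon_1(x),\epsilon_2(x),\ldots)$ of the original algorithm to a proposal $(Z_1(x),Z_3(x),Z_5(x),\ldots)$ of the variant, and that under this coupling the colors $T_A$, the number of rejections before acceptance, and the accepted color all agree. Hence the random subproblem sizes $N_i(n)$ — with $N_0(n)=n$ and $N_{i+1}(n)$ the target of the $B$ phase at stage $i$ — have exactly the law they had in the proof of Theorem \ref{theorem sum of series}: deterministically $N_{i+1}/N_i\le 1/2$, and $N_{i+1}(n)/N_i(n)\to 1/4$ in probability for each fixed $i$, the latter coming from the Chebyshev/concentration argument already carried out there (the $A$-weighted sum $T_A$ has mean $\sim n/2$ and standard deviation of order $n^{3/4}$, with the conditional-on-acceptance bound robust because $\p(\text{accept})$ is bounded away from $0$ by the analog of Theorem \ref{prop root 2}).

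Next I would account for the per-stage cost. By hypothesis the cost to propose the variant $A=(Z_1(x)-\epsilon_1(x),Z_3(x),Z_5(x),\ldots)$ for a problem of size $m$ is $D_A(m)\sim D(m)/2$, and $D(m)=m^\theta$ times a slowly varying function; the asymptotic acceptance cost of one step is $\sqrt 2$ by the analog of Theorem \ref{prop root 2} (moving $\epsilon_1(x)$ to the $B$ side gains the extra factor of $2$ in $C$, exactly as before). So the expected work at stage $i$, given the problem there has size $N_i(n)$, is asymptotically $\sqrt 2\, D_A(N_i(n))\sim \sqrt 2\, D(N_i(n))/2$, plus the $o(D(N_i(n)))$ cost of everything else (including, here, evaluating Euler's bijection — this is folded into the "cost of all steps other than proposals is negligible" assumption). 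Summing, the total random cost is
$$
S(n):=\sum_{i\ge 0}\Big(\sqrt 2\, D_A(N_i(n))+o(D(N_i(n)))\Big),
$$
and I would reduce to controlling $\sum_{i\ge0} D(N_i(n))$.

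Finally I would run the regular-variation bookkeeping: since $D$ is regularly varying of index $\theta$ and $N_i(n)/n\to 4^{-i}$ in probability with the deterministic domination $N_i(n)/n\le 2^{-i}$, the uniform-convergence theorem for regularly varying functions gives $D(N_i(n))/D(n)\to 4^{-i\theta}$ in probability, and the summable deterministic bound $D(N_i(n))/D(n)\le C\,2^{-i\theta}$ (valid for $n$ large, $C$ absolute) lets me exchange sum and limit, so $\sum_i D(N_i(n))/D(n)\to\sum_i 4^{-i\theta}=1/(1-4^{-\theta})$ in probability; boundedness of the ratios then upgrades this to convergence of expectations. Multiplying by $\sqrt 2/2$ and absorbing the $o(\cdot)$ terms yields the asymptotic total cost $\frac{1}{1-1/4^\theta}\,\sqrt 2\, D(n)/2$, and since $\theta\ge 1/2$ gives $1/(1-4^{-\theta})\le 1/(1-1/2)=2$, this is at most $\sqrt 2\, D(n)$. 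The main obstacle, as in Theorem \ref{theorem sum of series}, is making the concentration of $N_{i+1}(n)/N_i(n)$ around $1/4$ survive conditioning on acceptance and then propagate to all $i$ uniformly enough to justify interchanging the infinite sum with the limit; but the deterministic $1/2$-contraction makes the tail uniformly summable, so this is routine rather than delicate. I would also remark that the only genuinely new input beyond Theorem \ref{theorem sum of series} is the hypothesis $D_A(n)\sim D(n)/2$ together with the assertion that Euler's bijection is computationally negligible, which is exactly why the statement is phrased as an \emph{assumption}.
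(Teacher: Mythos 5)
Your proposal is correct and follows exactly the route the paper intends: the paper's own proof is the single line ``essentially the same as the proof of Theorem \ref{theorem sum of series},'' and what you have written is precisely that argument carried over, with the coupling to the original algorithm supplying the law of the subproblem sizes $N_i(n)$, the analog of Theorem \ref{prop root 2} supplying the per-stage acceptance cost $\sqrt{2}$, the hypothesis $D_A(n)\sim D(n)/2$ replacing $C(n)$, and the same regular-variation and dominated-convergence bookkeeping yielding the factor $1/(1-4^{-\theta})$. No gaps; your version is simply more explicit than the paper's.
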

 \begin{proof}
Essentially the same as the proof of Theorem \ref{theorem sum of series}.
\end{proof}
It is plausible that the cost function $C(n)$ from Theorem \ref{theorem sum of series} and the the cost function $D(n)$ from Theorem \ref{theorem sum of series variation} are related by $C(n) \sim D(n)$;  note that this depends on the choice of costing scheme, essentially asking whether or not the algorithmic cost of carrying out Euler's odd-distinct bijection is
negligible.

\subsection{Complexity considerations}
\label{sect cost}
In Remark~\ref{X} and 
at the end of Section \ref{sect acceptance} we note that in the general view of probabilistic divide-and-conquer algorithms,
a key consideration is computability of the acceptance threshold $t(a)$.  
Recall that we write $p(n)$ for the number of integer partitions of size~$n$. 
The case of integer partitions, using any of  the divide-and-conquer algorithms  of Section \ref{sect d(z)}, is perhaps exceptionally easy, in that computing the acceptance threshold 
is essentially the same as evaluating $p(n)$, an extremely well-studied task.


For $n>10^4,$ a \emph{single term} of the Hardy--Ramanujan asymptotic series suffices to evaluate
$p(n)$ with relative error less than $10^{-16}$;  see  Lehmer \cite{Lehmer38,Lehmer39}.\footnote{We thank David Moews for bringing these papers to our attention.} 
This single term is
\begin{equation} \label{hr1}
hr_1(n) := \frac{\exp(y)}{4 \sqrt{3} (n-\frac{1}{24})} \ \left(1 - \frac{1}{y} \right),   \mbox{ where } y= 2c\sqrt{n-\frac{1}{24}} ,
\end{equation}
and numerical tabulation\footnote{done in Mathematica\textsuperscript{\textregistered} 8.},  together with Lehmer's guarantee, shows that
\begin{equation} \label{sufficiently large HR}
 \| \ln p(n)  - \ln hr_1(n) \|  < 10^{-16}  \mbox{ for all } n \ge 489. 
 \end{equation}

While Equation~\eqref{sufficiently large HR} is sufficient for most practical aspects of computing, with probability $2^{-k}$ we will need more than $k$ bits of $p(n)$ in order to determine the rejection step, with $k$ at most $\lceil\log_2(p(n))\rceil$. 
In the algorithm to sample a partition of size $n$,  we start with \eqref{dist Z}   with the choice $x=x(n) = e^{-c / \sqrt{n}}$;  the normalizing constant is  then $c(x) := \prod_{i \ge 1} (1-x^i)$, and \eqref{prob T=n} is explicitly $\p_x(T=j) = p(j)\, x^j \prod_{i \ge 1} (1-x^i)  $.
After taking into account the factors of 2 in \eqref{duck not 2}, and writing $y=x^2$, the threshold function  $t(\cdot) \equiv t_n(\cdot)$ 
for Algorithm~\ref{IP SS PDC procedure} is given by
\begin{equation}\label{super fraction}
 t(\ell) = \frac{ \p_y(T = \ell)}{\max_j \p_y(T = j)} = \frac{ p(\ell)\, y^\ell }{\max_j p(j)\, y^j }, 
\end{equation}
where the rigthmost expression is the result of cancellation of the factor $c(y)$ from the middle expression.
Let us first focus on the numerator on the far right side of \eqref{super fraction}. 
The cost to evaluate $p(n)$ is not so straightforward, however, and an approach is outlined in~\cite{johansson2012efficient} which is $O(n^{1/2 + o(1)})$. 
If one wants an arithmetic cost bound that is better than $O(n^{1/2+o(1)})$, then the following is needed.

\begin{lemma}\label{r bits}
The arithmetic cost to obtain the first $r$ bits of $p(n)$ is $O(r\, \log^5(n) ).$ 
\end{lemma}
\begin{proof}
Our proof is a modification of the proof in Section~3 of~\cite{johansson2012efficient}, which assumes that all terms in the Rademacher series expansion~\cite{Rademacher} are evaluated with an absolute error of at most $2^{-3} / N$, where $N$ is the number of terms of the Rademacher series taken. 
Let $s := \lfloor \log_2 p(n)\rfloor$ denote a lower bound on the number of nontrivial bits in $p(n)$.  
In our case, since the main contribution of error comes from truncating the series, and this error, in the notation of Equation~\eqref{RnN}, is $R(n,N)$, we choose an $N$ which bounds $R(n,N)$ by $2^{s-r-1}$, and hence only require each term of the series to be evaluated with an absolute error of $2^{s-r-4} / N$, which is much less restrictive. 

The details are straightforward and messy, and the punchline is that when $r = o(\sqrt{n})$, and even in fact for $r$ up to a small constant times $\sqrt{n}$, the first two terms of the Rademacher series to approximate $p(n)$ suffice\footnote{Empirically the first term suffices, but the error analysis in~\cite{Rademacher, Lehmer39} requires a second term.}.  
Thus, we require that each of the terms in the Rademacher series are evaluated with an absolute error of $2^{s-r-5}$ or less. 
\ignore{Also, instead of requiring that the $k^{th}$ term in the Rademacher series be evaluated at a precision of $O(n^{1/2}/k)$ bits, since it was used with $N = O(\sqrt{n})$ terms, and in this case we simply need at most $N = 2$ terms, each at a precision of $r$ bits; this is because the maximum error with $N=2$ terms is bounded by $R(n,N)$, with $\log_2 R(n,N) = \log_2(e) \pi \sqrt{2/3} \sqrt{n}/2$ plus a logarithmic correction term. 
}
Then one substitutes this into \cite[Equation~3.13]{johansson2012efficient}, and obtains the cost to evaluate the leading $r$ bits of $p(n)$ is at most 
\[ O\left(\sum_{k=1}^{2} \log k\, M\left(r\right) \log^2\left(r\right)\right)= O(r\, \log^{4}(n)), \]
where $O(M(r) \log^2(r))$ is the cost to evaluate the largest $r$ bits of an elementary function\footnote{It is noted in \cite{johansson2012efficient} that there are two competing algorithms to evaluate the largest $r$ bits of elementary functions, one which is $O(M(r) \log(r))$, and another which is $O(M(r)\log^2(r))$, but is faster in practice.  Since logarithmic terms do not affect the conclusion of Lemma~\ref{decision}, we are content with the the slightly larger estimate.}, and $M(r) = O(r \log^{1+o(1)}(r))$ is the cost to multiply two $r$-bit numbers. 
When $r = \Omega(\sqrt{n})$, we may simply evaluate $p(n)$ exactly, which was shown to require $O(n^{1/2} \log^{4+o(1)}(n))$ arithmetic operations in \cite{johansson2012efficient}, and hence we have opted for the simpler and concrete bound $5$ in place of $4 + o(1)$ since extra factors of $\log(n)$ do not affect Lemma~\ref{decision}.

The details are as follows.
Let $\mu = \frac{\pi}{6} (24n-1)^{1/2}$.  The Rademacher series for $p(n)$, as given by \cite{Lehmer38},  is
\[ p(n) = \frac{\sqrt{12}}{24n-1} \sum_{k=1}^N \frac{A_k(n)}{k^{1/2}} \left( \left(1 - \frac{k}{\mu}\right)e^{\mu / k} + \left(1 + \frac{k}{\mu}\right)e^{-\mu/k}\right) + R(n,N), \]
where 
$A_k(n)$ is a complicated sum of $24k$th roots of unity, and
\begin{equation}\label{RnN}
 R(n,N) = \sum_{k=N+1}^\infty \frac{A_k(n)}{k^{1/2}}\left( \left(1 - \frac{k}{\mu}\right)e^{\mu / k} + \left(1 + \frac{k}{\mu}\right)e^{-\mu/k}\right). 
 \end{equation}
Let $v := \mu / N$, and define
\[ F(n,N) := \frac{N^{-2/3}\pi^2}{\sqrt{3}}\left(\frac{\sinh(v)}{v^3} + \frac{1}{6} - \frac{1}{v^2}\right). \]
It was shown in~\cite{Lehmer38} that
\[ |R(n,N)| < F(n,N) \]
for all positive integers $n$ and $N$. 

Let $t_k$ denote the $k^{th}$ term in the Rademacher series expansion, and suppose $\widehat{t}_k$ is the numerical approximation stored in a computer. 
In order to compute the leading $r$ bits of $p(n),$ 
we need
\begin{equation}\label{numerical} | t_k - \widehat{t}_k | < \frac{0.125\times 2^{s-r}}{N}. \end{equation}
This differs from~\cite[Equation~(3.1)]{johansson2012efficient} by the extra factor of $2^{s-r}$, which means we can be less precise when computing the value of $t_k$. 
A theorem analogous to~\cite[Theorem 4]{johansson2012efficient} thus holds; i.e., for $n > 2000$, for Equation~\eqref{numerical} to hold, it is sufficient to evaluate $t_k$ using a precision of $b = \max( \log_2(N)  + \log_2(|t_k|)  - (s- r) + \log_2(n) + 3, \frac{1}{2} \log_2(n)+5, 11)$ bits. 

There are two cases to consider: $r = o(\sqrt{n})$ and $r = \Omega(\sqrt{n})$.  
When $r = \Omega(\sqrt{n})$, we may simply evaluate $p(n)$ exactly, which requires $O(n^{1/2} \log^{4+o(1)}(n))$ arithmetic operations. 
When $r = o(\sqrt{n})$, however, we note that $\sum_{i=1}^N t_k$ has an absolute error of at most $F(n,N)$, which is at most $e^{\mu / N}$/2. 
Thus, $t_1+t_2$ guarantees \emph{at least} $\log_2(e) \mu/2$ correct bits, up to a logarithmic correction factor which can be made explicit. 

We then make the assumption, as is done in~\cite[Section~3.2]{johansson2012efficient}, that $r$-bit floating point numbers can be multiplied in time $M(r) = O(r \log^{1+o(1)}r)$ time. 
Then, to calculate the partial sum consisting of $2$ terms, we evaluate 
\[ O\left(\sum_{k=1}^{2} (\log k)\,  M\left(r\right) \log^2\left(r\right)\right)= O(r\, \log^{4}(n)), \]
where $M(r) = O(r \log^{1+o(1)}(r))$ is assumed to be the cost to multiply two $r$-bit numbers. 
\end{proof}

\begin{lemma}\label{decision}
For random $U$ distributed uniformly in $(0,1)$, the expected number of arithmetic operations for determining whether $U \leq t(\ell),$ where $t(\ell)$ is given in Equation~\eqref{super fraction}, is $O(\log^{5}(n)).$
\end{lemma}
\begin{proof}

The probability of needing more than $r$ bits is given by $2^{-r}$, and so the total expected amount of arithmetic operations for the evaluation of $t(\ell)$ is given by 
\[  O\left(\sum_{k \geq 1} k\, \log^{5}(n)\, 2^{-k}\right) = O(\log^{5}(n)). \]
\end{proof}

To deal with the denominator of \eqref{super fraction},  there are two strategies.   The first strategy is to find
the argument $j$ which achieves $\max_j p(j)\, y^j$, i.e., which achieves $\max_j ( j \log y + \log p(j))$.  Now, 
\cite{DeSalvoPak,Nicolas} shows that $n \mapsto p(n)$ is log-concave for all $n>25$,  so bisection search over the range $j=1$ to $n$  requires $O(\log n)$ evaluations of $p(j)$.  To be very careful, we must also note that we are dealing with \emph{approximate} evaluations of $p(j)$;  since
$p(k) - p(k-1) \sim p(k) / \sqrt{k}$, we need only calculate the first $\log(n)$ bits of two terms of the form
$j \log x + \log p(j)$ in order to determine which is greater, and Lemma \ref{r bits} applies.   
The second strategy is to modify the threshold function $t(\ell)$ in \eqref{super fraction},  by giving away a factor
of the form $1+\delta$ for some fixed $\delta>0$;   this  has the effect of increasing the number of rejections, and hence the number of random bits used, by this same factor.  In detail,   \eqref{sufficiently large HR} implies that
$c_0 := \max_{j \ge 1} | \ln p(j)  - \ln hr_1(j) | < \infty$, and one can easily evaluate $c_0$.  Hence, with $1+\delta := \exp(c_0)$, we have $p(j) \le (1+\delta) hr_1(j)$ for all $j \ge 1$, so the modified rejection threshold,
$t'(\ell) := p(\ell) y^\ell/ \max_j ( (1+\delta) hr_1(j) y^j)$,  satisfies $t' \le 1$, and is a valid rejection threshold. 

\ignore{
To obtain the denominator, we use the fact that with $x = e^{-c / \sqrt{n}}$, $T$ is asymptotically normally distributed with expected value $n$, and hence is concentrated around its mean.  Thus, for some positive integer $n'$, we have $\max_k p(k) x^k = p(n')x^{n'}$, and we now show that $|n' - n| = O(1)$. 

\begin{lemma}\label{concentrated}
Let $x = e^{-c / \sqrt{n}}$, where $c = \pi/\sqrt{6}$.  Then the value $n'$, for which $\max_k p(k) x^k = p(n')x^{n'}$, satisfies $|n - n'| = O(1)$ asymptotically as $n$ tends to infinity. 
\end{lemma}

For each $n$, let $b_{k} := p(k)e^{-c\, k/\sqrt{n}}$. 
A search algorithm would then start by calculating $b_{n}, b_{n-1}, b_{n+1}$.  
If $b_{n}$ is the max of all three, then we stop. 
If not, then if $b_{n-1}$ is larger, we continue to calculate $b_{n-k}$, $k=1, 2, \ldots$ until a local max is found; similarly, if $b_{n+1}$ is larger than $b_{n}$, then we continue to calculate $b_{n+k}$, $k=1,2,\ldots$ until a local max is found. 
In order to guarantee the local minimum found is a global maximum, we use the fact that $p(n)$ is log-concave for all $n>25$~\cite{DeSalvoPak}, hence $b_n$ is also log--concave. 
Thus, for $n>25$, the algorithm defined above converges to \emph{the} global max of $b_{k}$, say $b_{n'}$, in time that is $O(n -n')$, where $n$ is the initial guess. 
Thus, by Lemma~\ref{concentrated} above, we find the exact maximum after $O(1)$ steps. 


The final ingredient is that we do not need to calculate $b_k$ exactly; rather, since $p(k) - p(k-1) \sim p(k) / \sqrt{k}$, we need only calculate the first $\log(k)$ bits of the expressions in order to determine which is greater. 
Thus, by Lemma~\ref{r bits}, the total cost to perform this search algorithm is at most $O(\log^\alpha(n))$, where $\alpha$ is some fixed, universal constant independent of all parameters. 
}    

\subsection{Partitions with restrictions}
\ignore{
As with unrestricted partitions, if $n$ is moderate and a recursive formula exists, analogous to that of 
Section \ref{sect table}, 
then the table method is the most rapid, and divide-and-conquer is not needed.  However, the requirement of random access storage of size $n^2$ is a severe limitation.
}
The self-similar recursive divide-and-conquer method of Section \ref{sect d(z)} is nearly unbeatable for large $n$, for unrestricted partitions.
There are many classes of partitions with restrictions that iterate nicely, and should be susceptible to a corresponding recursive divide-and-conquer algorithm, \emph{provided} efficient enumeration, analogous to \eqref{hr1}, is available.  Some of these classes, with their self-similar divisions, are
\begin{enumerate}
\item distinct parts,  $d(z) = d_{odd}(z)d(z^2) $;
\item odd parts, $p_{odd}(z) = d_{odd}(z)p_{odd}(z^2) $;
\item distinct odd parts, $d_{odd}(z) = d_{*}(z) d_{odd}(z^3)$.
\end{enumerate}
Here $d_{\ast}(z) = (1+z)(1+z^5)(1+z^7)(1+z^{11}) \cdots$ represents distinct parts $\equiv \pm 1 \mod 6$.  Other recurrences are discussed in \cite{recurrenceasymptotics,Pak,Remmel}, and the standard text on partitions \cite{AndrewsP}.

It is straightforward to apply deterministic second half to restricted partitions, as the rejection probability is given explicitly by a ratio of geometric probabilities. 
To apply recursive PDC to restricted partitions, one must have some means 
of explicitly bounding the error in any approximation to the number of partitions subject to restrictions, in order to evaluate the functional $\mathbbm{1}(U < t(a))$. 
As Section~\ref{sect cost} shows, this is accessible, via the work of Rademacher \cite{Rademacher} and Lehmer \cite{Lehmer38, Lehmer39}, when there are no restrictions. 
When the restrictions are of a certain form, then there are completely effective error rates available~\cite{Sills}. 
Other generalizations to Rademacher's method are also available~\cite{almkvist1991, almkvist, Laughlin, Engel}. 

\ignore{
It is not easy to come up with examples where the optimal divide-and-conquer 
is like that in  Section \ref{sect divide}, based on small parts versus large parts.  
One suggestion is partitions with all parts prime; there should be a large range of $n$ for which table methods are ruled out by the memory requirement, while the $n$ memory, $b \times n$ computational time to calculate rejection probabilities is not prohibitive.  Another suggestion is partitions with a restriction 
on the multiplicity, for example, a  part of size $i$ can occur at most $f(i)$ times --- with $f$ sufficiently complicated as to rule out
recursive formulas such as those in the preceding paragraph.  
 }

\ignore{
Note, we have taken $D(n)$ to be the cost of proposing \emph{all} of $(Z_1(x),Z_2(x),\ldots, Z_n(x))$, so that the top level $A$ proposal uses half of that cost; but the sum of the geometric series for carrying out the main problem and all its subproblems
brings in an additional factor of two; the one half cancels with the two.
}

\ignore{
One of the motivations for wanting an honest  \emph{sample}, that is, observations $S_1,S_2,\ldots$ which not only are distributed exactly as the pre-specified $S$, but are also independent, is to be able to estimate properties involving more than one $S$ at a time.  A natural example, discussed in Section \ref{sect missing}, is dominance between independently chosen \emph{pairs} of partitions of $n$.  One might want an honest sample of $m$ partitions, to form $m/2$ independent pairs, so that the variability of the estimator is that of the Binomial($m/2,\pi_n$) distribution.  An alternate is the following unbiased estimator, whose variability cannot be analyzed.  We start as in \eqref{def all count}, but take instead $W_2$ pairs, where $W_2$ is 
\begin{equation}\label{def all 2 count}
   W_2 = W_2(m_1,m_2) = \sum_{1 \le i \ne i' \le m_1,1 \le j \ne j' \le m_2} I_{i,j} I_{i',j'} ,
\end{equation}
where we write $I_{ij}$ for the indicator $h(A_i,B_j)$.
We test for dominance in each of the $W_2$ pairs.
Then, analogous to a special case of Theorem \ref{unbiased}, conditional on $(W_2>0)$, the proportion of pairs exhibiting dominance is an unbiased estimator, with
$$
\e\left(\left.  \frac{1}{W_2} \sum_{1 \le i \ne i' \le m_1,1 \le j \ne j' \le m_2}   I_{i,j} I_{i',j'}1(\lambda \mbox{ dominates} \lambda') \right| W_2  > 0\right)\ = \pi_n.
$$
Here of course $\lambda$ is the partition of $n$ formed by combining $A_i$ with $B_j$, and $\lambda'$, rather than the dual of $\lambda$,  is the partition of $n$ formed by combining $A_{i'}$ with $B_{j'}$.  The mutual independence of the four
random elements, $A_i, A_{i'},B_j$, and $B_{j'}$, is all that is needed to have $\lambda,\lambda'$ independent, so that conditioning on $ I_{i,j} I_{i',j'}=1$, we have two independent uniformly chosen partitions.
}

 \section{Probabilistic divide-and-conquer with mix-and-match}\label{mam}

\subsection{Algorithmic implications of the basic lemma}\label{subsection basic}

Assume that one wants a sample of fixed size $m$ from the distribution of $S$.  That is, one wants to carry out a simulation that provides $S_1,S_2,\ldots$, with $S_1,S_2,\ldots,S_m$ being  independent, with each equal in distribution to $S$.  According to Lemma
\ref{lemma 1}, this can be done by providing $m$ independent pairs $(X_i,Y_i)$, $i=1$ to $m$, each equal in distribution to $S$.

A reasonable choice of how to carry this out (not using mix-and-match) is given in Algorithm~\ref{PDC pre-mix procedure}.

\ignore{
\vspace{1pc}
\noindent {\bf Outline of an algorithm to gather sample of size $m$.}\\
\noindent {\bf Stage 1}.  Sample $X_1,X_2,\ldots,X_m$ from the distribution of $X$, i.e., from $\L(X) = \L( \, A \, | \, h(A,B)=1 \, )$.\\
\noindent {\bf Stage 2}.  Conditional on the result of stage 1 producing $(X_1,\ldots,X_m)=(a_1,\ldots,a_m)$,
find $Y_1,Y_2,\ldots,Y_m$,  independent, with distributions $\L(Y_i) = \L( \, B \, | \, h(a_i,B)=1 \, )$ for $i=1,2,\ldots,m$.
}

\begin{algorithm}{\rm
\begin{algorithmic}
\State 1.  Generate $X_1, X_2, \ldots, X_m$ i.i.d.~from $\L(A\, |\, h(A,B)=1)$.
\State 2. Conditional on $(X_1,\ldots,X_m)=(x_1,\ldots,x_m)$, \\
\ \ \ \ generate $Y_1,Y_2,\ldots,Y_m$,  independent, with distributions \\
\ \ \ \ $\L(Y_i) = \L( \, B \, | \, h(x_i,B)=1 \, )$ for $i=1,2,\ldots,m$.
\State 3.  Return $( (X_1, Y_1), (X_2, Y_{2}), \ldots, (X_m, Y_{m}) )$.  
\end{algorithmic}}
\caption{Probabilistic Divide-and-Conquer $m$ samples} 
\label{PDC pre-mix procedure}
\end{algorithm}

\vspace{1 pc}
  Note that in general, conditional on the result of stage 1, the $Y_i$ in stage 2 are \emph{not identically} distributed.

\subsection{\emph{Simple} matching enables mix-and-match}\label{sect match}

An important class of PDC sampling algorithms are those for which the matching condition $h(A,B) = 1$ defines a disjoint union of complete bipartite graphs; that is, there exists a (set) partition $I_1, I_2, \ldots$ of $\mathcal{A}$ and a corresponding (set) partition $J_1, J_2, \ldots$ of $\mathcal{B}$ such that $h(A,B) = 1$ if and only if $A\in I_i$ and $B \in J_i$, $i=1,2,\ldots$.  
When this condition holds, we call the matching \emph{simple}, and say that \emph{mix-and-match} is enabled. 

The following lemma serves 
to clarify
the logical structure of what is needed to enable 
mix-and-match.

\begin{lemma}\label{match lemma}
Given $h : \A \times \B \to \{0,1\}$, the following two conditions are equivalent: \\
Condition 1: 
$\forall a,a' \in \A, b,b' \in \B,$
\begin{equation}\label{square} 1=h(a,b)=h(a',b)=h(a,b') \mbox{ implies } h(a',b')=1,
\end{equation}
Condition 2: 
$\exists \C$, and functions  $c_\A: \A \to \C, c_\B:\B \to \C$, so that  $\forall (a,b) \in \A \times \B,$
\begin{equation}\label{color} 
h(a,b)=1(c_\A(a)=c_\B(b)).
\end{equation}
\end{lemma}\qed

We think of $\C$ as a set of \emph{colors}, so that condition \eqref{color} says that $a$ and $b$ match
if and only if they have the same color.

\ignore{
\begin{proof}
That \eqref{color} implies \eqref{square} is trivial.  In the other direction, it is easy to check that
\eqref{square} implies that the relation $\sim_\A$ on $\A$ given by $a \sim_\A a'$ iff $\exists b \in \B, 1=h(a,b)=h(a',b)$
is an \emph{equivalence} relation.  Likewise for
the relation $\sim_\B$ on $\B$ given by $b \sim_\B b'$ iff $\exists a \in \A, 1=h(a,b)=h(a,b')$.
For the set of colors, $\C$, we might take either the set of equivalence classes of $\A$ modulo $ \sim_\A$, or the set of 
equivalence classes of 
 $\B$ modulo $\sim_\B$, and then \eqref{square} also provides a bijection between these two sets of equivalence classes, to induce \eqref{color}.
\ignore{Also, we do not focus on \emph{measurability}, but we note that under the assumption that $\A$ and $\B$ are sets endowed with sigma-algebras of measurable sets, and $h$ is measurable with respect to the product 
sigma-algebra, the the color maps $c_A$ and $c_B$ may be taken to be measurable.
} 
\end{proof}

\begin{remark}
 The proof of Lemma \ref{match lemma}, with equivalence classes of
$\A /\! \!\sim_\A$ and $\B / \!\! \sim_\B$, shows that the pair of coloring functions satisfying
\eqref{color} is  \emph{essentially unique}.  Specifically, unique apart from relabeling and padding, i.e.,  an arbitrary permutation on the names of the colors used, and enlarging the range, $\C$, to an arbitrary superset of the image.\\
\end{remark}
} 

\ignore{
\begin{remark}
The statement of Lemma \ref{match lemma} shows that coloring is   \emph{not essentially} an issue of
 \emph{sufficient statistics}.  After all,  hypothesis \eqref{square} only concerns the logical
structure of the matching function $h$ appearing in \eqref{def h}, and does not involve the distributions
on $\A$ and $\B$ appearing in \eqref{def AB}.\\
\end{remark}
} 
\begin{remark}
When \eqref{color} holds, we can write the event that $A$ matches $B$ as a union indexed by the color involved:
$$
 \{ h(A,B)=1\} = \cup_{k \in \C}  \{  c_\A(A)=k, c_\B(B)=k\},
$$
so that $p = \sum_{k \in \C}  \p(c_\A(A)=k, c_B(B)=k)$, and we see that at most a countable set of colors $k$ contribute a strictly positive amount to $p$.  As a notational convenience, we take $\mathbb{N} \subset \C$, and use positive integers $k$ for the names of colors that have 
\begin{equation}\label{possible color}
 \p(c_\A(A)=k, c_B(B)=k) > 0.
\end{equation}
\end{remark}

\begin{remark} Here is an example which is \emph{not} simple, i.e., does not satisfy \eqref{color}.  The configuration method for random $r$-regular graphs on $n$ vertices,
\cite[Section 2.4]{randomgraphs},  involves a uniform choice over a set of size $(2n-1)!!$.  For any choice $1 < b < n$, one might take $\A,\B$ with $| \A \times \B|=(2n-1)!!, |\B|=(2b-1)!!$, so that $A$ corresponds to the first $n-b$ choices that need to be made to specify a configuration, and $B$ corresponds to the final $b$ choices.  The matching function $h$ is given by $h(A,B)$, the indicator that the multigraph implied by the configuration $(A,B)$ has no loops and no multiple edges.
\end{remark}

\begin{remark}
Observe that the matching function from Section~\ref{Algorithms Section} with $h(A,B) = 1(T_A +T_B = n)$, for $(A,B) \in \A \times \B$, satisfies condition 2 of Lemma \ref{match lemma}, with $c_\A(A)=T_A$ and $c_\B(B)=n-T_B$.  Hence \emph{mix-and-match} is enabled.  
\end{remark}

The intent of the following lemma is to show that
  if $h$ satisfies \eqref{color}, then mix-and-match  strategies can be used in stage 2 of the broad outline of
Section \ref{subsection basic}. 
We consider a procedure which proposes a sequence $D_1,D_2,\ldots$ of elements of $\B$ with the following properties:
we assume there is a sequence of $\sigma-$algebras  $\F_0 \subset \F_1 \subset \F_2 \subset \cdots$ for which $D_n$ is $\F_n$ measurable.  We think of $\F_0$ as carrying the information from stage 1 of an algorithm along the lines described in Section \ref{subsection basic}, carrying information such as
``which demands $a_1,a_2,\ldots$ must be met'' --- or reduced information, such as the colors $c_\A(a_1),\ldots,c_\A(a_m)$.

\begin{lemma}\label{match lemma 2}
Assume that $h$ satisfies \eqref{color} and that the following conditions hold:
\begin{enumerate}
\item For every $n \ge 1$ and $k$ satisying \eqref{possible color}, conditional on $\F_{n-1}$ together with $c_\B(D_n)=k$, the distribution of $D_n$ is equal to $\L(B | c_\B(B)=k)$.

\item For every $k$ satisying \eqref{possible color}, 
\begin{equation}\label{infinitely many}
\mbox{with probability 1, infinitely many }n \mbox{ have } c_\B(D_n)=k.
\end{equation}

\item Define stopping times $\tau^{(k)}_i$, the ``time $n$ of the $i$-th instance of $c_\B(D_n)=k$", by $\tau^{(k)}_0=0$
and for $i \ge 1, \tau^{(k)}_i= \inf\{n> \tau^{(k)}_{i-1}: c_\B(D_n)=k\}$.  We write $D(n) \equiv D_n$, to avoid multi-level
subscripting, and define $B^{(k)}_i := D(\tau^{(k)}_i)$ for $i=1,2,\ldots$.
\end{enumerate}

\emph{Then}, for each $k$, the  $B^{(k)}_1,B^{(k)}_2,\ldots$ are independent, with the distribution \mbox{$\L(B | c_\B(B)=k)$}, and as $k$ varies, these sequences are  independent.

\end{lemma}

\begin{proof}
The proof is a routine exercise 
by summing over all possible  values for the random times $\tau^{(k)}_i $.
Writing out the  full argument would be  notationally messy, and not interesting.
 \end{proof}

\ignore{
\subsection{Use of acceptance/rejection sampling}\label{sect acceptance mix}
Assume that we know how to simulate $A$ --- this is under the distribution in \eqref{def AB}, where $A,B$ are independent.
But, we need instead to sample from an alternate distribution, denoted above as that of $X \in \A$. 
The rejection method recipe, for using \eqref{A to X}, may be viewed as having 2 steps, as follows.

\begin{enumerate}

\item Find a threshold function $t: \A \to [0,1]$, with $t(a)$ proportional to $\e h(a,B)/p$, i.e., $t$ of the form
$t(a) = C\times \e h(a,B)/p$ for some positive constant $C$.

\item Propose i.i.d.~samples $A_1,A_2,\ldots$ and independently generate uniform (0,1) random variables $U_1,U_2,\ldots$, until $U_i \leq t(A_i).$

\end{enumerate}
See \cite{DeVroye} for a thorough treatment of acceptance/rejection methods.  \\
}

\subsection{Divide-and-conquer with mix-and-match} \label{sect mix}
 
 
When a sample of size $m>1$ is desired, and the matching function is simple, Lemmas~\ref{lemma 1},~\ref{match lemma},~and~\ref{match lemma 2} combine to suggest the following algorithm.  
Stage 1 of the algorithm generates samples $A_1,A_2,\ldots, A_m$ from $X$, according to Lemma \ref{lemma 1}.   This creates a multiset of $m$ colors, $\{c_1,\ldots,c_m\}$, where $c_i=c_\A(A_i)$.  We think of these as $m$ demands that must be met by Stage 2 of the algorithm.   One strategy for Stage 2 is to generate an i.i.d.~sequence of
samples of $B$;  initially, for each sample, we compute its color $c=c_\B(B)$ and check whether $c$ is in the multiset of demands
$\{c_1,\ldots,c_m\}$;  when we find a match, we pair $B$ with one of the $A_i$ of the matching color, to produce our first sample, which we set as  $S_i=(A_i,B)$.  Then we reduce the multiset of demands by one element, and iterate, until all $m$ demands have been met.
Lemma \ref{match lemma 2} implies that the resulting list $(S_1,S_2,\ldots,S_m)$ is an i.i.d.~sample of $m$ values of $S$, as desired.

A simple algorithmic view of the mix-and-match PDC algorithm is as follows, which assumes that mix-and-match is enabled.

\begin{algorithm}{\rm
\begin{algorithmic}
\State 1.  Generate $X_1, X_2, \ldots, X_m$ i.i.d.~from $\L(A\, |\, h(A,B)=1)$.
\State 2.  Generate $B_1, B_2, \ldots$ i.i.d.~from $\L(B)$ until all $X_i$'s have a match.
\State 3.  Return $( (X_1, B_{j_1}), (X_2, B_{j_2}), \ldots, (X_m, B_{j_m}) )$.  
\end{algorithmic}}
\caption{Probabilistic Divide-and-Conquer Mix-and-Match} 
\label{PDC mix procedure}
\end{algorithm}

We refer to Step 1 as the ``$A$ phase" and Step 2 as the ``$B$ phase."  
The $A$ phase can be performed using the techniques of previous sections.  The $B$ phase is straightforward since we are sampling from the \emph{unconditional} distribution $\L(B)$.  

\begin{remark}\label{order matters}
For Step 3, it is important to note that  \emph{the order in which completed samples are returned is not arbitrary!}  
In particular:
\begin{enumerate}
\item[(a)] We cannot report the sample in the order in which matching $B_{j_i}$'s are found.  That is, if we denote by $(j_1), (j_2), \ldots, (j_m)$ the ordering of the sequence $j_1, \ldots, j_m$ from smallest to largest, and $i_1, \ldots, i_m$ the corresponding indices for the matching samples from the $A$ phase, then in general 
\[( (X_{1_1}, B_{(j_1)}), (X_{i_2}, B_{(j_2)}), \ldots, (X_{i_m}, B_{(j_m)}) )
\] 
is \emph{not} an i.i.d.~sample from $\L(S)$.
\item[(b)] Continuing with the previous point, we can either apply a random permutation to the list, or report the sample in the order in which the $X_i$'s are generated in the $A$ phase, as we have done in Step 3.  
\end{enumerate}

The practical significance of item (a) above is that if we are running a simulation for $m$ samples, and we wish to abort the simulation early and/or print out partial results for a smaller sample size $r \leq m$, \emph{we can only certify that $( (X_1, B_{j_1}), \ldots, (X_r, B_{j_r}) )$ is an i.i.d.~sample of size $r$ from $\L(S)$ if every $X_i$, $i \leq r,$ has been matched with a $B_{j_i}$.}
\end{remark}

\ignore{
\begin{remark}\label{remark non sample}
Note that in the above, if the first match found, $(A_i,B)$, is labelled as $S_1$, and the second matching pair is labelled $S_2$, and so on, then the resulting list $(S_1,S_2,\ldots,S_m)$ is \emph{not} necessarily an i.i.d.~sample of $S$;  the colors $c$ with $\p(c=c_\B(B))$ large would tend to show up earlier in the list.  
\end{remark}
} 

 \subsubsection{Roaming $x$}
\label{roaming x}

Consider again a sample of size $m=1$.  Having accepted $X=(Z_{b+1},\ldots,Z_n)$ with color $k=T_A$, in the notation 
of \eqref{def TAB}, we now need $Y$, which is $B=(Z_1,\ldots,Z_b)$ conditional on having color $k$, which simplifies to having $n-k=T_B :=\sum_{i=1}^b i Z_i$.  One obvious strategy is to sample $B$ repeatedly, until getting lucky. The distribution of $B$  is specified by \eqref{dist Z} and \eqref{def x(n)} --- with a choice of parameter, $x=x(n)$, not taking into account the values of $b$ and $n-k$.  A computation similar to \eqref{prob lambda} shows that the distribution of $(Z_1(y),\ldots,Z_b(y))$ conditional on $\left(\sum_{i=1}^b i Z_i(y) =n-k\right)$ is the same, for all choices $y \in (0,1)$.  
 
As observed in \cite[Section 5, page 114)]{IPARCS},  the $y$ which maximizes $\p_y (T_B=n-k)$ 
 is the
solution of $n-k=\sum_{i=1}^b \e i Z_i(y)$.  Thus, in the case $m=1$,  the optimal choice of $y$ is easily prescribed.
However, for large $m$, 
 using mix-and-match brings into play a complicated coupon collector's situation.
   With a multiset of demands $\{c_1,\ldots,c_m\}$ from Section \ref{sect mix}, the freedom to allow $y$ to roam allows us to tilt the distribution in response to the demands that remain at each stage.  The algorithm designer has many choices of global strategy.
Based on computer experiments, it is not obvious whether or not a greedy strategy --- picking $y$  to maximize the chance that  the next proposed $B$ satisfies at least one of the demands --- is optimal.

  \ignore{
\subsubsection{Deliberately missing data}\label{sect missing}
 
In Stage 2 of mix-and-match, starting with $m$ demands $c_1,c_2,\ldots,c_m$, there is a coupon collector's
advantage initially, that wears off when only a few demands remain to be met.


   Suppose we stop before completing the $B$ phase, with some $v$ of the demands remaining to be met.  
     This list of $m-v$ is \emph{not} a sample of size $m-v$, since \emph{sample} requires  i.i.d.~from the original target distribution.  But, had we run the $B$ phase to completion, we would have had an honest sample of size $m$, so there is information in knowing all but $v$ of the $m$ values.
Think of this sample of size $m$ as \emph{the sample},  with some $v$ of its elements being unknown.  For estimates based on the sample proportion, an error of size at most $v/m$ is introduced by the unknown elements. Since the standard deviation, due to sample variability, decays roughly like 
$1/\sqrt{m}$, it makes sense to allow $v$ comparable to $\sqrt{m}$.
}
   \ignore{
A more complicated situation arises when the basic task is to simulate \emph{pairs}
$(\lambda,\mu)$ of partitions, with all $(p(n))^2$ pairs equally likely   --- this arises naturally, when the question is to estimate the probability $\pi_n$ that $\lambda$ dominates $\mu$.\footnote{We say $\lambda$ dominates $\mu$ if
$$
\sum_{i=1}^k \lambda_i \geq \sum_{i=1}^k \mu_i
$$
	for all $k\geq 1$.}
If one has an honest sample of $2m$ partitions of $n$, with $v$  missing items due to terminating the $B$ phase early, then one would have an honest sample of $m-V$ pairs $(\lambda,\mu)$, with random variable $V\le v$.\footnote{The pairing on $\{1,2,\ldots,2m\}$ must be assigned \emph{before} observing which $v$ items are missing.  Pairing up the missing partitions, in order to get $\lceil v/2 \rceil$ missing pairs, in \emph{not valid}; see Remark \ref{remark non sample}.}  
  If we have $v=0$, and $H$ of the pairs contribute a 1 to the estimate of the sample proportion, 
   then the point estimate for $p := \pi_n$ is $H/m$, and the 
standard $(1-\alpha)\%$ confidence interval is
$$ 
\left[\frac{H}{m}-z_{\alpha/2}\sqrt{\frac{p(1-p)}{m}},\frac{H}{m}+z_{\alpha/2}\sqrt{\frac{p(1-p)}{m}}\right].
$$

With $V$ missing pairs, consider the count $K$, how many of the  $m-V$ completed pairs  had  $\lambda$ dominates $\mu$.   We can do a worst-case analysis by assuming, on one side, that all $V$ of the missing pairs
have domination, and on the other side, that none of the $V$ missing pairs have domination;  more succinctly,
$H \in [K,K+V] $.  Hence the confidence interval
$$ 
\left[\frac{K}{m}-z_{\alpha/2}\sqrt{\frac{p(1-p)}{m}},\frac{K+V}{m}+z_{\alpha/2}\sqrt{\frac{p(1-p)}{m}}\right]
$$
is at least a $(1-\alpha)\%$ confidence interval, for the procedure with deliberately missing data\footnote{
We feel compelled to speculate that many users of ``confidence intervals" don't really care about the confidence, nor the width of the interval, but really rely on the center of the interval, which in the standard case is an unbiased estimator. Our interval has center $(K+V/2)/m$, and this value is not an unbiased estimator;  indeed, anything based on $K$, including the natural $K/(m-V)$, is biased in an unknowable way.
}.
} 

 \ignore{
\subsection{For comparison: opportunistic divide-and-conquer with mix-and-match}\label{sect opportunistic}

Recall the setup of \eqref{def AB} -- \eqref{def S}.  
Care was taken in Section \ref{sect match} to enable a mix-and-match procedure that produced a genuine random sample of $(A,B)$ conditional on $(h(A,B)=1)$.  An algorithm that implements the procedure as stated in Lemma \ref{match lemma 2}, produces a certain amount of discarded data, 
both from the rejection threshold of Stage 1 and from the coupon collector's paradigm of matching at Stage 2.  One might try instead generating, for fixed  $m$, independent  $A_1,\ldots,A_m$, $B_1,\ldots,B_m$, and search for \emph{all} matching pairs.  

Of course, the difficulty with the general program ``search all $m^2$ index pairs $(i,j)$'' is that conflicting matches might be found.  Suppose, for example, that exactly two matches are found, with  $A_i$ matching both $B_j$ and $B_{j'}$,  with $j \ne j'$.  It is easy to see that  taking both $AB$ pairs  ruins the i.i.d.~nature of a sample.  Also, though perhaps not as obvious, other strategies, such as suppressing both $AB$ pairs, or taking only the pair indexed by the lexicographically first of $(i,j), (i,j')$, or taking only one pair, based on an additional coin toss, introduce bias relative to  the desired distribution for $S$.
  Here, we point out that nonetheless, the natural opportunistic 
procedure does supply a \emph{consistent} estimator.

Take a deterministic design:  for integers $m_1,m_2 \ge 1$, let $A_1,\ldots, A_{m_1}$ be distributed according to the
distribution of $A$ given in \eqref{def AB}, and let $B_1,\ldots,B_{m_2}$ be distributed according to the distribution of $B$, with
these $m_1+m_2$ sample values being  independent.  The {\bf opportunistic} observations, under a \emph{take-all-you-can-get} strategy, are all the pairs $(A_i,B_j)$ for which $h(A_i,B_j)=1$.  To use these in an estimator, one would naturally count the available pairs, via
\begin{equation}\label{def all count}
   W = W(m_1,m_2) = \sum_{1 \le i \le m_1,1 \le j \le m_2} h(A_i,B_j),
\end{equation}
and for a deterministic function $g : {\rm support} \, h \subset \A \times \B \ \to \mathbb{R}$, form the total score from these pairs,
say
\begin{equation}\label{def G}
   G \equiv G(m_1,m_2) = \sum_{1 \le i \le m_1,1 \le j \le m_2}  g(A_i,B_j) \, h(A_i,B_j).
\end{equation}

The natural estimator is the observed average score per pair,  $G/W$.  Unfortunately, this is not \emph{unbiased}.
However, it is \emph{consistent}, 
which is \eqref{claim GW} in the following theorem.
 
\begin{theorem}\label{unbiased}
For bounded $g$, 
$G/(m_1m_2)$ gives an unbiased estimator for $p \,  \e g(S)$.
 Furthermore, as $m_1,m_2 \to \infty$,
 \begin{equation}\label{claim G}
G(m_1,m_2)/(m_1m_2) \to p \,  \e g(S), 
\end{equation} 
and
\begin{equation}\label{claim GW}
G(m_1,m_2)/W(m_1,m_2) \to  \e g(S), 
\end{equation}
where the convergence is convergence in probability.
\end{theorem}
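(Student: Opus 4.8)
The plan is to dispatch the unbiasedness claim by a one-line expectation computation, then prove \eqref{claim G} via a second-moment (weak law of large numbers) estimate, and finally deduce \eqref{claim GW} by applying that same estimate twice and dividing.

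For unbiasedness, I would write $\e G(m_1,m_2) = \sum_{i,j} \e\big( g(A_i,B_j)\, h(A_i,B_j) \big)$, and note that by \eqref{def AB}--\eqref{indep AB} each pair $(A_i,B_j)$ has the same joint law as $(A,B)$, so each summand equals $\e\big(g(A,B)h(A,B)\big) = p\,\e g(S)$ by the restatement of \eqref{def S} recorded just before Lemma \ref{lemma 1}. Dividing by $m_1m_2$ gives $\e\big(G/(m_1m_2)\big) = p\,\e g(S)$. The main content is \eqref{claim G}: here the point is that the $m_1m_2$ summands $g(A_i,B_j)h(A_i,B_j)$ are uniformly bounded (by $\sup|g|$, since $h\le 1$) and, crucially, any two of them with \emph{no shared index} --- i.e.\ with $i\ne i'$ and $j\ne j'$ --- are functions of disjoint subcollections of the independent family $A_1,\dots,A_{m_1},B_1,\dots,B_{m_2}$, hence uncorrelated. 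Counting the surviving covariance terms, at most $m_1 m_2^2 + m_1^2 m_2$ of the $(m_1m_2)^2$ pairs have $i=i'$ or $j=j'$, and each such covariance is bounded by a constant, so $\var(G) = O(m_1 m_2^2 + m_1^2 m_2)$ and $\var\big(G/(m_1m_2)\big) = O(1/m_1 + 1/m_2) \to 0$. Combined with unbiasedness, this yields convergence of $G/(m_1m_2)$ to $p\,\e g(S)$ in $L^2$, hence in probability.

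For \eqref{claim GW}, I would observe that $W(m_1,m_2)$ is precisely $G(m_1,m_2)$ with $g\equiv 1$, so the previous paragraph (applied with that choice, which is trivially bounded) gives $W/(m_1m_2)\to p$ in probability; since $p>0$ by \eqref{def h}, in particular $\p(W=0)\to 0$. On the event $\{W>0\}$ we write $G/W = \big(G/(m_1m_2)\big)\big/\big(W/(m_1m_2)\big)$, and the continuous mapping theorem (or Slutsky's lemma) gives that this ratio converges in probability to $(p\,\e g(S))/p = \e g(S)$; folding the vanishing probability $\p(W=0)$ into the error bound handles the otherwise-undefined case, so \eqref{claim GW} holds in the sense of convergence in probability.

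I expect the only step demanding genuine care to be the covariance bookkeeping behind $\var(G)=O(m_1m_2^2+m_1^2 m_2)$ --- namely, checking that covariances vanish whenever no index is shared (immediate from independence of the $A_i$'s and $B_j$'s) and counting the surviving terms correctly. Everything else is routine, and it is worth flagging explicitly that the boundedness hypothesis on $g$ is exactly what makes all these variances finite and the bounds uniform in $m_1,m_2$.
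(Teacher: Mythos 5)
Your proof is correct and follows essentially the same route as the paper's: the same term-by-term expectation computation for unbiasedness, the same covariance bookkeeping (only pairs sharing an index contribute, giving $\var(G)=O(m_1m_2^2+m_1^2m_2)$ and Chebyshev), and the same specialization $g\equiv 1$ to get $W/(m_1m_2)\to p$ before dividing. Your explicit Slutsky step and handling of the event $\{W=0\}$ merely spell out what the paper leaves implicit.
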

\begin{proof}
To show unbiased:  for each $i,j$ we have, since $h$ is an indicator, with $\e h(A_i,B_j)=p$,
$$
  \e  g(A_i,B_j) \, h(A_i,B_j)  = \e ( g(A_i,B_j) \, |  h(A_i,B_j)=1) \ \times p . 
$$
According to \eqref{def S}, the conditional distribution of $(A_i,B_j)$ given $h=1$ is equal to the distribution of $S$, so the right-hand side of the display above equals $p \,  \e g(S)$.
  
To prove \eqref{claim G}, start by writing $X_{ij}= g(A_i,B_j) \, h(A_i,B_j)$.  In the variance-covariance expansion of Var($G)$, there are $m_1m_2$ terms \\ cov($X_{ij},X_{i'j'})$ where both $i=i'$ and $j=j'$, $m_1m_2^2$ terms
where only $i=i'$, $m_1^2m_2$ terms where only $j=j'$, and all other terms --- involving four independent random variables
$A_i,A_{i'},B_j,B_{j'}$ --- are zero.  Hence Var($G/(m_1m_2))$ $ \to 0$, and Chebyshev's inequality implies the desired convergence in probability. 

Taking the special case $g=1$, the random variable $G$ is the count $W$,  so \eqref{claim G}
implies that
$W/(m_1m_2)$ $ \to p$ in probability.  Combined with \eqref{claim G} for the general $g$, this
proves \eqref{claim GW}.
\end{proof}
 }

\section{Computational considerations for implementing PDC on integer partitions}
\label{appendix}


In Section \ref{Algorithms Section}, several methods were presented for the simulation of partitions of the integer $n$.  The analysis focused on the asymptotic rejection probabilities, which varied by choice of $(A,B)$.   

The punchline is: instead of rejection sampling (Algorithm~\ref{IP WTGL procedure}), the default algorithm should always be PDC deterministic second half with von Neumann's rejection method (Algorithm~\ref{IP PDC DSH procedure}).  It is guaranteed faster than rejection sampling, requires the same amount of memory, and under various restrictions on the parts of an integer partition it is still applicable even when self-similar bijections do not exist.  In addition, it is no more difficult to program on a computer.

\emph{Then}, if more efficiency is needed, one can weigh the costs of storing all or part of a table of values or exploring the existence of self-similar bijections.  



\ignore{
 \subsection{Method of choice for unrestricted partitions}\label{sect choice}

If one is interested in generating just a few partitions of a moderately sized $n$, then the waiting-to-get-lucky method, with a 
 ``time $n$'' method of proposal for $(Z_1,\ldots,Z_n)$,  is very easy to program.  The overall runtime is order of $n^{7/4}$ --- a factor of $n$ to make a proposal,\footnote{A smarter  ``time $\sqrt{n}$\,'' method of proposal for $(Z_1,\ldots,Z_n)$ is described  in Secton \ref{sect cost}.  It is harder to program, but gets the overall runtime down to order of $n^{5/4}$.} and a factor of $n^{3/4}$ for waiting-to-get-lucky.  
For example, the  Matlab\textsuperscript{\textregistered} \cite{MATLAB} code\footnote{Line \eqref{naive matlab} implements the proposal, and is explained in Section \ref{sect naive}; the line before and the line after \eqref{naive matlab} implement waiting-to-get-lucky.}
{\tt
\begin{eqnarray}
\nonumber&& \text{n=100; logxn=-sqrt(6*n)/pi; s=0;}   \\
\nonumber&& \text{while s\~{}=n,} \\ 
\label{naive matlab}&&    \text{Z=floor(log(rand(n,1))./(1:n)'.*logxn);}\\  
\nonumber&&    \text{s=(1:n)*Z;end} 
\end{eqnarray}}
\hspace{-.77em}   runs on a common desktop computer\footnote{Macintosh iMac, 3.06 Ghz, 4 GB RAM.} 
at around 600 partitions of $n=100$ per second; with $n=1000$ the same runs at about 20 partitions per second, and at $n=10,000$ takes about 2 seconds per partition.

\ignore{ The supporting evidence is
tic,rand('state',0),for rr=1:100,n=1000;logx=-sqrt(6*n)/pi;s=0;while s~=n,Z=floor(log(rand(n,1))./(1:n)'.*logx);s=(1:n)*Z;end,end,toc
Elapsed time is 4.244796 seconds.
>> tic,rand('state',0),for rr=1:100,n=100;logx=-sqrt(6*n)/pi;s=0;while s~=n,Z=floor(log(rand(n,1))./(1:n)'.*logx);s=(1:n)*Z;end,end,toc
Elapsed time is 0.158214 seconds.
tic,rand('state',0),for rr=1:100,n=10000;logx=-sqrt(6*n)/pi;s=0;while s~=n,Z=floor(log(rand(n,1))./(1:n)'.*logx);s=(1:n)*Z;end,end,toc
Elapsed time is 207.702194 seconds.
}  

The table method is by far the fastest method, if
one is interested in generating many samples, and the table of size $n^2$ floating point numbers fits in random access memory.
For example, for $n=10,000$ the same computer as above takes 5 seconds to generate the table --- a one time cost, and then finds 40 partitions per second.
At $n=15,000$, the same computer takes 28 seconds to generate the table, and then finds 25 partitions per second.   But at $n= 19,000$, the computer freezes, as too much memory was requested.

\ignore{ The supporting evidence is
>> tic,n=10000;kk=n;x=exp(-pi/sqrt(6*n));makexPartitionTable,toc
Elapsed time is 4.807604 seconds.
>> tic, for rr=1:1000, getpartition,end;toc
Elapsed time is 25.816201 seconds.
>> tic,n=10000;kk=n;x=exp(-pi/sqrt(6*n));makexPartitionTable,toc
Elapsed time is 4.555448 seconds.
>> tic,n=11000;kk=n;x=exp(-pi/sqrt(6*n));makexPartitionTable,toc
Elapsed time is 11.256547 seconds.
>> clear,tic,n=11000;kk=n;x=exp(-pi/sqrt(6*n));makexPartitionTable,toc
Elapsed time is 6.059108 seconds.
>> clear,tic,n=12000;kk=n;x=exp(-pi/sqrt(6*n));makexPartitionTable,toc
Elapsed time is 15.343318 seconds.
clear,tic,n=13000;kk=n;x=exp(-pi/sqrt(6*n));makexPartitionTable,toc
Elapsed time is 10.212148 seconds.
clear,tic,n=14000;kk=n;x=exp(-pi/sqrt(6*n));makexPartitionTable,toc
Elapsed time is 16.258301 seconds.
>> tic, for rr=1:1000, getpartition,end;toc
Elapsed time is 37.649698 seconds.
clear,tic,n=15000;kk=n;x=exp(-pi/sqrt(6*n));makexPartitionTable,toc
Elapsed time is 27.851075 seconds.
>> tic, for rr=1:1000, getpartition,end;toc
Elapsed time is 40.713551 seconds.
 clear,tic,n=16000;kk=n;x=exp(-pi/sqrt(6*n));makexPartitionTable,toc
Elapsed time is 42.583734 seconds.
clear,tic,n=17000;kk=n;x=exp(-pi/sqrt(6*n));makexPartitionTable,toc
Elapsed time is 44.105427 seconds.
 clear,tic,n=18000;kk=n;x=exp(-pi/sqrt(6*n));makexPartitionTable,toc
Elapsed time is 54.846707 seconds.
 clear,tic,n=18000;kk=n;x=exp(-pi/sqrt(6*n));makexPartitionTable,toc
Elapsed time is 54.676892 seconds.
>> tic, for rr=1:1000, getpartition,end;toc
Elapsed time is 54.248130 seconds.

  USING office computer
>>	
Elapsed time is 25.891377 seconds.
>> tic,for rr=1:2,n=2^20;getZZ,end,toc
Elapsed time is 2.427617 seconds.
>> tic,for rr=1:25,n=2^20;getZZ,end,toc
Elapsed time is 24.284738 seconds.
>> tic,for rr=1:25,n=2^24;getZZ,end,toc
Elapsed time is 59.996582 seconds.
>> tic,for rr=1:25,n=2^28;getZZ,end,toc
Elapsed time is 142.021811 seconds.
>> tic,for rr=1:25,n=2^30;getZZ,end,toc
Elapsed time is 364.425667 seconds.

using home computer
nrt=[];for ee=18:2:32,n=2^ee;tic,for rr=1:100,getZZ,end,nrt=[nrt;n,rr,toc],end
nrt =
                    262144                       100              96.748373929
                   1048576                       100             130.860450022
                   4194304                       100             162.146577866
                  16777216                       100              202.51236085
                  67108864                       100             283.496338926
                 268435456                       100             469.360962572
}

The divide-and-conquer methods of Sections \ref{sect divide} and \ref{sect b=1}, using the small versus large division of
\eqref{b}, 
offer a large speedup over waiting-to-get-lucky, but only for  case
$b=1$, with its deterministic second half, can we analyze the speedup --- the $\sqrt{n}/c$ factor in Theorem \ref{b=1 theorem}.

The divide-and-conquer method based on $p(z) =d(z) p(z^2) $ is unbeatable for large $n$.  Regardless of the manner of costing,
be it only counting random bits used, or uniform costing, or logarithmic (bitop) costing, the cost to find a random partition of $n$ must be asymptotically at least as large as the time to propose $(Z_1,\ldots,Z_n)$ for a random partition of a random number around $n$.   The entire divide-and-conquer algorithm of Theorem \ref{theorem sum of series variation}, compared with just proposing 
$(Z_1,\ldots,Z_n)$, has asymptotically an extra cost factor of $\sqrt{2}$. So the claim of \emph{unbeatable} at the start of this paragraph really means:  asymptotically unbeatable by anything more than a factor of $ \sqrt{2}$.

}

\subsection{ On proposing an instance of the independent process}
All of the algorithms for integer partitions center around the generation of variates from the process of independent random variables $\UZ =(Z_1, Z_2, \ldots, Z_n)$, sampled under \eqref{dist Z} and \eqref{def x(n)}.
The number of random bits needed to sample from this process is at least the (base 2)  entropy $H(\UZ)$.
With $x$ chosen as in \eqref{def x(n)}, it is 
nontrivial\footnote{Using the notation of entropy and conditional entropy as in \cite{cover}, 
\begin{align*}
H(\UZ) & = \sum_m \p(T=m) H(\UZ | T=m)+ H(T) \\
 & \geq \sum_m \p(T=m) H(\UZ |T=m) \\
 & = \sum_m \p(T=m) \log_2(p(m)).
 \end{align*}
Then using Chebyshev's inequality together with \eqref{SD}, we see that the sum can be restricted to $m \ge m_0 =(1-\varepsilon) n$ with $\p(T \ge m_0) \to 1$ and $\varepsilon \to 0$, which proves the lower bound.  The upper bound follows by observing that $H(T) = o(\log(p((1-\varepsilon)n)))$ for all $\varepsilon$ sufficiently small.}
to see that this entropy is asymptotically $\log_2(p(n))$, and hence, by Hardy--Ramanujan \eqref{HR}, asymptotic to
$(2/\ln 2) c \sqrt{n}$, with $c=\pi/\sqrt{6}$.  

\subsubsection{Na\"{\i}ve proposals}

The simplest procedure for sampling from the independent process $\UZ =(Z_1, Z_2, \ldots, Z_n)$ is to sample each coordinate separately, using the fact that, if $Z$ is geometrically distributed with parameter $p$, then $Z =_d \lfloor \ln(U) / \ln(1-p)\rfloor$ for $U$ uniform over the interval $(0,1)$.  Iterating through all coordinates supplies a $O(n)$ procedure for sampling $\UZ$. 

\ignore{, given in Algorithm~\ref{Naive Proposal}.

\begin{algorithm}{\rm
\begin{algorithmic}
\State Let $u_1, u_2, \ldots u_n$ denote i.i.d.~uniform $(0,1)$ random variables.
\State Let $x = e^{-\pi/\sqrt{6n}}$.  
\For {$i=1,...,n$}
\State $z_i = \log(u_i) / (i \log(x) )$
\EndFor
\State \Return $( z_1, \ldots, z_n )$.  
\end{algorithmic}}
\caption{Naive Proposal for $\UZ = (Z_1, \ldots, Z_n)$} 
\label{Naive Proposal}
\end{algorithm}
} 
Erd\H{o}s and Lehner \cite{lehner} showed that with probability tending to 1,  the largest part  is close to  $2 c \sqrt{n} \log(n)$, and that the number of part-sizes, corresponding to  the number of nonzero $Z_i$'s in $\UZ$, is close to $(1/c) \sqrt{n}$, with $c = \pi/\sqrt{6}$.  
This implies that with high probability $Z_i = 0$ for all $i\gg \sqrt{n} \log(n)$.  
There are several adaptations one can make, such as pooling the variables together under a single uniform random variable, but the implementations are messy.

 A variation due to Sheldon Ross\footnote{Personal communication.} would generate $L$, the largest index for which $Z_j > 0$, whose distribution is given by
$$
\p(L = j) = x^j \prod_{k>j} (1-x^k).
$$
Conditional on $\{L=j\}$ for some $j>0$, the distribution of $\UZ$ is equal to the distribution of the vector  $(Z_1, Z_2, \ldots, 1+Z_j,0,0,\ldots)$.  

\ignore{ 
\subsubsection{A na\"{\i}ve proposal, with an extra cost factor $\sqrt{ n}$}
\label{sect naive}
 The algorithm \eqref{naive matlab} of Section \ref{sect choice} generates a geometric random variable $Z$ with parameter $a \in (0,1)$, so that $\p(Z \ge k)=a^k$ for $k=0,1,2,\ldots$, via
 $Z = \lfloor \,\ln U  / \ln a \rfloor$, where $U$ is uniformly distributed in $(0,1)$.  That $Z$ is geometrically distributed follows from the inversion method, see for example \cite{DeVroye}, or more directly, by the following calculation: $\p(  \lfloor \,\ln U  / \ln a \rfloor \ge k) = \p( \ln U  / \ln a \ge k) = \p( U \le a^k) =  a^k$.
This is applied with $a = x^i = \exp(-ic/\sqrt{n})$, for $i=1$ to $n$. 
This approach, generating each $Z_i$, $i=1$ to $n$, using $n$ independent random uniforms, 
results in a ``time $n$" procedure 
to propose one instance of $\UZ$.

\subsubsection{A proposal, with an extra cost factor $\log n$}

Erd\H{o}s and Lehner \cite{lehner} showed that with probability tending to 1,  the largest part  is close to  $2 c \sqrt{n} \log(n)$, and that the number of part-sizes, corresponding to  the number of nonzero $Z_i$'s in $\UZ$, is close to $(1/c) \sqrt{n}$.  
This implies that with high probability $Z_i = 0$ for all $i\gg \sqrt{n} \log(n)$.  
One way to exploit this, which we call a `collective coin toss' method\footnote{This is also similar to pooled sample data, see for example  \cite[Chapter IX Problem 26]{Feller}.}, 
 is to pick $k_n$ a little bigger than $2 c \sqrt{n} \log(n)$
and compute
$$
\p(Z_i = 0 \text{ for all } i \geq k_n) =  \prod_{i=k_n}^n ( 1-x^i) =: 1-\epsilon_{k_n}.
$$
If the first random uniform $U$ gets lucky and falls within the interval $[\epsilon_{k_n},1]$, then we have already determined that $Z_i=0$ for $i=k_n$ to $n$.  Of course, fraction $\epsilon_{k_n}$ of the trials fail to be lucky,  complicating the proposal procedure.   
 
 A variation due to Sheldon Ross\footnote{private communication.} would generate $L$, the largest index for which $Z_j > 0$, whose distribution is given by
$$
\p(L = j) = x^j \prod_{k>j} (1-x^k).
$$
Conditional on $(L=j)$ for some $j>0$, the distribution of $\UZ$ is equal to the distribution of the vector  $(Z_1, Z_2, \ldots, 1+Z_j,0,0,\ldots)$.  
}

\subsubsection{A proposal on the same order as the lower entropy bound}
\label{sect soph}
Our recommended proposal is one that takes advantage of the relation between geometric and Poisson random variables.  See \cite{bodini} for an alternative description.

A geometric random variable $Z$ with parameter $0<a<1$ can be represented as a sum of independent Poisson random variables $Y_j$, $j=1,2,\ldots$, as $Z = \sum_j j Y_j$, where $\e Y_j = a^j/j$ (this is easily verified using generating functions).  The random variables $Y_j$ can be generated via a Poisson process as follows.  Let $r = \sum_j a^j/j$, and divide the interval $[0,r]$ into disjoint intervals of length $a, a^2/2, a^3/3$, etc.  Then $Y_j = k$ if exactly $k$ arrivals occur in the interval of length $a^j/j$.

To simulate the vector $(Z_1(x),Z_2(x),\ldots,Z_i(x),\ldots)$, with parameters $a=x, x^2, \ldots,x^i,\ldots$, we fix disjoint intervals of length $x^{ij}/j$ for $i,j \ge 1$, and run a Poisson process on the interval $[0,s]$, where $s = \sum_{i,j} x^{ij}/j$.

Our claim that we have an algorithm using $O(\sqrt{n})$ calls to a random number generator is supported by the calculation here that, with $x=x(n)=\exp(-c/\sqrt{n})$ and $c=\pi/\sqrt{6}$, 
\begin{equation}\label{def s(n)}
s(n):= \sum_{i,j \ge 1} \frac{x^{ij}}{j} = \sum_j \frac{1}{j^{2}} \frac{ jx^j}{1-x^j} \le \frac{\pi^2}{6} \, \frac{x}{1-x} \sim c \sqrt{n};
\end{equation}
the inequality follows from the observation that for $0<x<1$, $jx^j \leq x+x^2+\ldots+x^j \leq x(1-x_j)/(1-x)$.

\subsection{Floating point considerations and coin tossing}
\label{floating}
Is floating point accuracy sufficient, in the context of computing an acceptance threshold $t(a)$?  There is a very concrete answer, based on \cite{KnuthYao}, see \cite[ Section 5.12]{cover} for an accessible exposition.
First, given $p \in (0,1)$, a $p$-coin can be tossed using a random number of fair coin tosses;  the expected number is exactly 2, unless $p$ is a $k$th level dyadic rational, i.e., $p=i/2^k$ with odd $i$, in which case the expected number is $2-2^{1-k}$.  The proof is by consideration  of say $B,B_1,B_2,\ldots$ i.i.d.~with $\p(B=0)=\p(B=1)=1/2$;  after $r$ tosses we have determined the first $r$ bits of the binary expansion of a random number $U$ which is uniformly distributed in $(0,1)$, and the usual simulation recipe  is that a $p$-coin is the indicator  $1(U <p)$.  Unless $\lfloor 2^r\, p \rfloor
= \lfloor 2^r \, U \rfloor$, the first $r$ fair coin tosses will have determined the value of the $p$-coin.
Exchanging the roles of $U$ and $p$, we see that the number of bits of precision read off of $p$ is, on average, 2, and exceeds 
$r$ with probability $2^{-r}$.
If a  floating point number delivers  50 bits of precision, the chance of needing more precision is $2^{-50}$ per
evaluation of an indicator of the form $1(U<p)$.  Our divide-and-conquer doesn't require very many acceptance/rejection decisions;  for example, with $n=2^{60}$, there are about 30 iterations of the algorithm in Theorem
\ref{prop root 2}, each involving on average about $\sqrt{2}$ acceptance/rejection decisions, according to Theorem
\ref{recursive theorem}.  So one might program the algorithm to deliver exact results; most of the time  determining  acceptance thresholds $p=t(a)$ in  floating point arithmetic, but keeping  track of whether more  bits of $p$ are needed.  On the  unlikely event, of probability around $30 \times  \sqrt{2} /2^{50} < 4 \times 10^{-14}$, that more precision is needed, the program demands 
a more accurate calculation of $t(a)$.
  This would be far more efficient than using extended integer arithmetic to calculate values of $p(n)$ exactly; see Remark~\ref{X} and Section~\ref{sect cost}.

Another place to consider the use of floating point arithmetic is in proposing the vector $(Z_1(x),\ldots,Z_n(x))$. 
If one call to the random number generator suffices to find the next arrival in a rate 1 Poisson process, we have an algorithm using $O(\sqrt{n})$ calls, which can propose the entire vector $(Z_1,Z_2,\ldots)$, using $x=x(n)$ from \eqref{def x(n)}.
The proposal algorithm, summarized in Section \ref{sect soph}, is  based on a compound Poisson representation of geometric distributions, and is similar to a coupling used in \cite{budalect}, Section 3.4.1.

Once again, suppose we want to guarantee \emph{exact} simulation of a proposal $(Z_1(x),\ldots,Z_n(x))$.
In the standard rate 1 Poisson process, run up to time $s(n)$, given by \eqref{def s(n)}, we need to assign \emph{exactly}, for each arrival, say at a random time $R$, the corresponding index
$(i,j)$, such that the partial sum for $s(n)$ up to, but excluding the $ij$ term, is less than $R$, but the partial sum, including the $ij$ term, is greater than or equal to $R$.  Based on an entropy result from Knuth and Yao \cite{KnuthYao}, a crucial quantity is
\begin{equation}\label{knuth h}
h(n):= \sum_{i,j \ge 1} \frac{x^{ij}}{j}  \ln \frac{j}{x^{ij}}  \le (c -\zeta'(2)/c) \ \sqrt{n}.
\end{equation}
An exact simulation of the Poisson process, assigning $ij$ labels to each arrival, 
can be done\footnote{on each interval $(m-1,m]$ for $m=1$ to $\lceil s(n) \rceil$, perform an exact simulation of the number of arrivals, which is distributed according to the Poisson distribution with mean 1.  For each arrival on $(m-1,m]$, there is a discrete distribution described by those partition points lying in $(m-1,m)$, together with the endpoints $m-1$ and $m$; calling the corresponding random variable $X_m$, the sum of the base 2 entropies satisfies $\sum h(X_m) \le h(n)+ s(n)$,  since each extra subdivision of one of the original subintervals of
length $x^{ij}/j$ adds at most one bit of entropy. }
 with $O( s(n)+h(n))$ genuine random bits, and the bounds for $s(n)$ and $h(n)$ show that this is $O(\sqrt{n})$.

\ignore{The costing scheme which counts only the expected number of random bits needed is clearly inadequate.  Consider the impractical algorithm:  list all $p(n)$ partitions, for example in lexicographic order.  Use $\lceil \log_2 p(n) \rceil$ random bits to choose an integer $I$ uniformly from
$\{1,2\ldots,p(n)\}$.  Report the $I^{\rm th}$ partition of $n$.      If one costs only by the number of random bits needed, the algorithm just described is \emph{strictly} unbeatable! 
 }
 
\section{Acknowledgements}

The authors would like to thank the anonymous referees for comments that contributed to the improvement of this paper, and to Fredrik Johansson for help with the proof of Lemma~\ref{r bits}. 

SD was partially supported by a Dana and David Dornsife final year dissertation fellowship.

 \bibliographystyle{plain}
\bibliography{PDC}

 \end{document}